\documentclass[11pt]{article}

\usepackage{amssymb, amsmath, amsfonts, amsthm, graphics, amscd}
\usepackage{authblk}
\newcommand{\mathsym}[1]{{}}

\usepackage{tikz}
\usepackage{comment}
\usepackage{algorithm,algorithmicx,algpseudocode}
\usepackage{pdflscape}
\usetikzlibrary{positioning,chains,fit,shapes,calc}

\newtheorem{theorem}{Theorem}[section]

\newtheorem{lemma}[theorem]{Lemma}

\addtolength{\oddsidemargin}{-.875in}
\addtolength{\evensidemargin}{-.875in}
\addtolength{\textwidth}{1.75in}
\addtolength{\topmargin}{-.875in}
\addtolength{\textheight}{1.75in}

\begin{document}

\title{Anomaly Detection and Correction\\in Large Labeled Bipartite Graphs
\\[0.9em]
\small{SUMMER $2016$ INTERN PROJECT}
}
\author[1]{R. W. R. Darling}
\author[2]{Mark L. Velednitsky}
\affil[1]{Mathematics Research Group, National Security Agency, USA}
\affil[2]{University of California, Berkeley, USA}
\date{\today}
\maketitle

\begin{abstract}
Binary classification problems can be naturally modeled as bipartite graphs, where we attempt to classify right nodes based on their left adjacencies. We consider the case of labeled bipartite graphs in which some labels and edges are not trustworthy. Our goal is to reduce noise by identifying and fixing these labels and edges.

We first propose a geometric technique for generating random graph instances with untrustworthy labels and analyze the resulting graph properties. We focus on generating graphs which reflect real-world data, where degree and label frequencies follow power law distributions.

We review several algorithms for the problem of detection and correction, proposing novel extensions and making observations specific to the bipartite case. These algorithms range from math programming algorithms to discrete combinatorial algorithms to Bayesian approximation algorithms to machine learning algorithms.

We compare the performance of all these algorithms using several metrics and, based on our observations, identify the relative strengths and weaknesses of each individual algorithm.
\end{abstract}

\tableofcontents

\section{Problem Statement}

\subsection{Anomaly Detection Literature Background}

The general problem of detecting anomalies in graphs has recieved wide interest. 
Several comprehensive surveys have been published, including Chandola, Banerjee, and Kumar \cite{chand}, as well as Akoglu, Tong and Koutra \cite{ako}.
Following the terminology of the latter, anomaly detection literature can be broken down by the type of graphs studied.
The first distinction is between static and dynamic graphs. In our case, we deal with static graphs: graphs that do not evolve over time.
The second distinction is between attributed and plain graphs. In our case, we are working on attributed graphs: in additional to structural properties of the graph, we care about its labels.
The problem of unifying notions of similarity based on structure and context has received great attention and given rise to algorithms such as SimRank \cite{jeh}.

In many applications, the anomaly detection problem is a natural complement of clustering. 
Anomalies are nodes which are not well described by any clustering.
Algorithms for anomaly detection can be inspired by or derived from clustering algorithms. We note these connections throughout our work.
Anomaly detection on bipartite graphs is closely related to the problem of co-clustering, addressed in \cite{dhi} and \cite{cha}.

The specific case of anomaly detection on bipartite graphs is addressed, for example, by Sun et al \cite{sun}, and by Dai et al \cite{dai}.
Labeled bipartite graphs are recognized as a natural model for mutual reinforcement in a truth discovery context. 
Given a set of evidence and a set of conclusions, some of which conflict, a piece of evidence is considered more reliable if it supports many valid conclusions and a conclusion is considered more valid if it is supported by more reliable evidence.
Examples of this structure appear in online information synthesis \cite{yin} and document summarization \cite{zha}, as well as in data repair for inconsistent databases \cite{afr,are}.
Additional cases which fit our bipartite model will be described in \ref{subsectApplications}.

The present work differs from others in the literature in its emphasis on algorithms for correcting anomalous vertex labels, rather than merely detecting them.

\subsection{Hypergraph Formulation}
Let $\mathcal{R}$ be a collection of subsets of the universe $\mathcal{U}$, also called a \textbf{hypergraph} on the vertex set $\mathcal{U}$. 
For illustrating our construct, we give a sample application (for a long list of potential applications, see \ref{subsectApplications}):

\textbf{DNA Mutations: } In this application, $\mathcal{U}$ is a universe of DNA sequences of individuals;
By examining sets of DNA sequences that share the same mutations, we would like to determine which genetic mutations correspond to which disease predispositions.
Each element of $\mathcal{R}$ is the subset of $\mathcal{U}$ that represents DNA sequences which share a given genetic mutation.

Each element $e \in \mathcal{U}$ has a true color in the set of colors $C$. Call this color $c(e)$, where $c : \mathcal{U} \to C$. This color is unobserved.
In DNA Mutations, a color would represent having a specific disease predisposition.
The color is considered unobserved because many of the individuals may not yet express the disease phenotype or it is difficult to diagnose.
The individual's medical information may not be known or tested, at all.
 
Each set $S \in \mathcal{R}$ has a \textbf{true color} $\bar{c}(S) \in C \cup \{\mho\}$. Here $\mho$ represents a ``wild'' color (explained in \ref{typesOfIrregularities}).
The true color $\bar{c}(S)$ is unobserved. Instead, a coloring $\tilde{c}(S) \in C$ is proposed to us
for each $S \in \mathcal{R}$.
In DNA Mutations, $\bar{c}(S)$ means that a gene mutation causes a specific disease predisposition, and
$\mho$ might stand for ``uncategorized,'' meaning a genetic mutation which is not predictive of disease predisposition.
Lab data gives us a prior belief about which genes are associated with which disease predispositions ($\tilde{c}(S)$).

In a conflict-free setting, each set consists only of elements which match its true color, and its true color 
matches its proposed color. That is, $$e \in S \implies c(e) = \bar{c}(S) = \tilde{c}(S).$$ 
However, in our applications there are irregularities where this rule is violated.
For example, in DNA Mutations there may be an error in testing where a disease predisposition is attributed to a mutation which is actually harmless.
Given the collection of subsets $\mathcal{R}$ and their proposed colors, $\tilde{c}(S)$ for each $S \in \mathcal{R}$, we are attempting to detect the irregularities and, consequentially, the true colors of the subsets: $\bar{c}(S)$ for $S \in \mathcal{R}$.

\subsection{Graph Expression} \label{graph_expression}
The information in the set formulation can be translated to an equivalent graph formulation. We construct a bipartite graph $G$. With each element $e \in \mathcal{U}$ we associate a left node $\ell_e$. With each set $S \in \mathcal{R}$ we associate a right node $r_S$. There is an edge from $\ell_e$ to $r_S$ if $e \in S$. We can initialize the graph with a coloring $\tilde{c}(S)$ on the right nodes.

In some applications, it will be useful to think of the graph slightly differently. 
Let $G'$ be a graph constructed by adding a new node $r_S'$ for each right node $r_S$ in $G$ and connecting $r_S'$ to $r_S$. This creates $|R|$ new nodes and edges. We call $G'$ the Auxiliary graph. We can think of the $r_S'$ as observed nodes and the rest of the nodes as hidden. While we may change the colors of the $r_S$, the colors of the $r_S'$ stay fixed.

\subsection{Applications} \label{subsectApplications}
This model has wide applications to contexts where classification is supported by shared evidence. We have already described the DNA Mutations application. In the following table, we briefly list a few others:

\begin{tabular}{|l|ccc|}
\hline
Application & Left Node & Right Node & Colors \\
\hline
Academic & Author & Paper & Discipline \\
Advertisement & Target & Campaign & Product Type \\
Commerce & Customer & Business & Location \\
Commerce & Product & Store & Category \\
Entertainment & Fan & Band & Genre \\
Entertainment & Watcher & Show & Genre \\
Finance & Trader & Security & Category \\
Legal & Case & Lawyer & Specialty Area \\
Medical & DNA Sequence & Mutation & Disease Predisposition \\
Medical & Patient & Doctor & Practice/Network \\
Medical & Symptom & Patient & Rare Illness \\
Politics & Individual & Endorser & Affiliation \\
Programming & Subroutine & Code & Application \\
Security & Crime & Criminal & Threat Level \\
Topic Modeling & Words & Sentences & Topic \\
\hline
\end{tabular}

We see a common structures emerging. Most right nodes can be assigned a single color. When a left node is associated with several right nodes, we can reasonably expect most of those right nodes to have the same color. Some right nodes may not fit well into any class and some left nodes may connect to right nodes across classes, but we expect the number of exceptions not to be too large (less than $20\%$ of the graph, for example). Finally, in realistic data we may expect one set of nodes to have degrees following a power law distribution.

\subsection{Types of Irregularities} \label{typesOfIrregularities}

There are three kinds of irregularities:

\begin{itemize}
	\item 
\textbf{Wildness: } This occurs when a right node (set) is a neighbor of random left nodes (elements). In our set theory notation, $\bar{c}(S) = \mho$.

	\item 
\textbf{Mislabelings}:  This occurs when the right node (set) does give consistent information but has been labeled with the wrong color. 
In our set theory notation,
the proposed color $\tilde{c}(S)$  differs from the true color $\bar{c}(S) \neq \mho$. 

	\item 
\textbf{Misattribution: } 
This occurs when a left node (element) has been identified with a right node (set) when it should not have been. In our set theory notation, 
$\bar{c}(S) \neq \mho$, but $c(e) \neq \bar{c}(S)$ for some $e \in S$.

\end{itemize}

In this paper, we will consider all three types of irregularities and evalute the effectiveness of our algorithms in detecting individual irregularities and combinations thereof.

\section{Graph Images}

There are four images:
\begin{enumerate}
	\item A clean graph with no anomalies (Figure \ref{image_clean}).
	\item A graph with anomalies added (Figure \ref{image_dirty}). For example, $y_1$ is mislabeled, $y_{11}$ is wild, and several misattributed edges have been added.
	\item What the input graph looks like (Figure \ref{image_input}). Notice that wilds are assigned a color label and left nodes are unlabeled.
	\item The Auxiliary graph (Figure \ref{aux_input}): a convenient construction.
\end{enumerate}

\definecolor{lightgreen}{RGB}{80,160,80}
\definecolor{lightblue}{RGB}{80,80,160}
\definecolor{lightred}{RGB}{160,80,80}

\begin{figure} 
\centering
\begin{tikzpicture}[thick,
  every node/.style={draw,circle},
  bnode/.style={fill=blue},
  gnode/.style={fill=green},
  rnode/.style={fill=red},
  blnode/.style={fill=lightblue},
  glnode/.style={fill=lightgreen},
  rlnode/.style={fill=lightred},  
]
\begin{scope}[start chain=going below,node distance=7mm]
\foreach \i in {1,2,...,4}
  \node[glnode,on chain] (l\i) [label=left:$x_{\i}$] {};
\foreach \i in {5,6,...,7}
  \node[blnode,on chain] (l\i) [label=left:$x_{\i}$] {};
\foreach \i in {8,9,...,11}
  \node[rlnode,on chain] (l\i) [label=left:$x_{\i}$] {};
\end{scope}
\begin{scope}[xshift=3cm,start chain=going below,node distance=7mm]
\foreach \i in {1,2,...,4}
  \node[gnode,on chain] (r\i) [label=right:$y_{\i}$] {};
\foreach \i in {5,6,...,7}
  \node[bnode,on chain] (r\i) [label=right:$y_{\i}$] {};
\foreach \i in {8,9,...,10}
  \node[rnode,on chain] (r\i) [label=right:$y_{\i}$] {};
\end{scope}
\draw (r1) -- (l1);
\draw (r1) -- (l2);
\draw (r1) -- (l4);
\draw (r2) -- (l2);
\draw (r2) -- (l3);
\draw (r3) -- (l3);
\draw (r4) -- (l3);
\draw (r4) -- (l4);
\draw (r5) -- (l5);
\draw (r6) -- (l7);
\draw (r5) -- (l6);
\draw (r5) -- (l6);
\draw (r7) -- (l5);
\draw (r6) -- (l5);
\draw (r8) -- (l8);
\draw (r9) -- (l9);
\draw (r9) -- (l10);
\draw (r10) -- (l11);
\draw (r10) -- (l8);
\draw (r8) -- (l10);
\draw (r8) -- (l8);
\draw (r8) -- (l11);
\end{tikzpicture}
\caption{The Bipartite Graph with No Noise}\label{image_clean}
\end{figure}
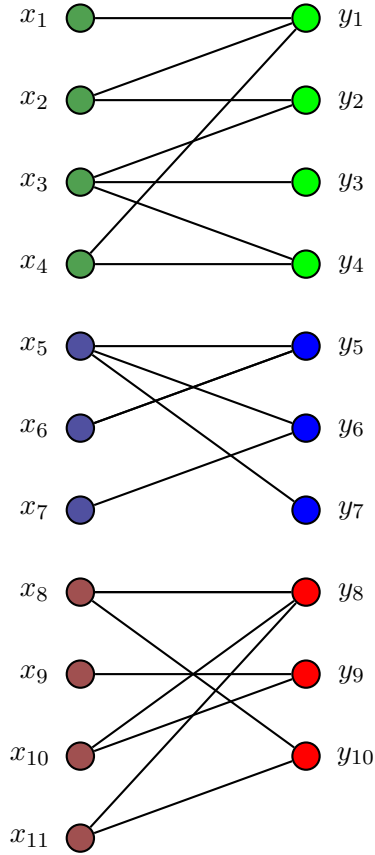

\begin{figure} 
\centering
\begin{tikzpicture}[thick,
  every node/.style={draw,circle},
  bnode/.style={fill=blue},
  gnode/.style={fill=green},
  rnode/.style={fill=red},
  blnode/.style={fill=lightblue},
  glnode/.style={fill=lightgreen},
  rlnode/.style={fill=lightred},
  wnode/.style={fill=white},
]
\begin{scope}[start chain=going below,node distance=7mm]
\foreach \i in {1,2,3,4}
  \node[glnode,on chain] (l\i) [label=left:$x_{\i}$] {};
\foreach \i in {5,6,7}
  \node[blnode,on chain] (l\i) [label=left:$x_{\i}$] {};
\foreach \i in {8,9,10,11}
  \node[rlnode,on chain] (l\i) [label=left:$x_{\i}$] {};
\end{scope}
\begin{scope}[xshift=3cm,start chain=going below,node distance=7mm]
\node[rnode,on chain] (r1) [label=right:$y_1$] {};
\foreach \i in {2,3,4}
  \node[gnode,on chain] (r\i) [label=right:$y_{\i}$] {};
\foreach \i in {5,6,7}
  \node[bnode,on chain] (r\i) [label=right:$y_{\i}$] {};
\foreach \i in {8,9,10}
  \node[rnode,on chain] (r\i) [label=right:$y_{\i}$] {};
\node[wnode,on chain] (r11) [label=right:$y_{11}$] {};
\end{scope}
\draw (r1) -- (l1);
\draw (r1) -- (l2);
\draw (r1) -- (l4);
\draw (r2) -- (l2);
\draw (r2) -- (l3);
\draw (r3) -- (l3);
\draw (r4) -- (l3);
\draw (r4) -- (l5);
\draw (r5) -- (l5);
\draw (r6) -- (l7);
\draw (r5) -- (l6);
\draw (r5) -- (l8);
\draw (r7) -- (l5);
\draw (r6) -- (l5);
\draw (r8) -- (l7);
\draw (r9) -- (l9);
\draw (r9) -- (l10);
\draw (r10) -- (l11);
\draw (r10) -- (l8);
\draw (r8) -- (l10);
\draw (r8) -- (l8);
\draw (r8) -- (l11);
\draw (r11) -- (l3);
\draw (r11) -- (l6);
\draw (r11) -- (l7);
\draw (r11) -- (l9);
\draw (r11) -- (l11);
\end{tikzpicture}
\caption{The Bipartite Graph with Noise}\label{image_dirty}
\end{figure}

\begin{figure} 
\centering
\begin{tikzpicture}[thick,
  every node/.style={draw,circle},
  bnode/.style={fill=blue},
  gnode/.style={fill=green},
  rnode/.style={fill=red},
  blnode/.style={fill=lightblue},
  glnode/.style={fill=lightgreen},
  rlnode/.style={fill=lightred},
  wnode/.style={fill=white},
]
\begin{scope}[start chain=going below,node distance=7mm]
\foreach \i in {1,2,3,4}
  \node[wnode,on chain] (l\i) [label=left:$x_{\i}$] {};
\foreach \i in {5,6,7}
  \node[wnode,on chain] (l\i) [label=left:$x_{\i}$] {};
\foreach \i in {8,9,10,11}
  \node[wnode,on chain] (l\i) [label=left:$x_{\i}$] {};
\end{scope}
\begin{scope}[xshift=3cm,start chain=going below,node distance=7mm]
\node[rnode,on chain] (r1) [label=right:$y_1$] {};
\foreach \i in {2,3,4}
  \node[gnode,on chain] (r\i) [label=right:$y_{\i}$] {};
\foreach \i in {5,6,7}
  \node[bnode,on chain] (r\i) [label=right:$y_{\i}$] {};
\foreach \i in {8,9,10}
  \node[rnode,on chain] (r\i) [label=right:$y_{\i}$] {};
\node[gnode,on chain] (r11) [label=right:$y_{11}$] {};
\end{scope}
\draw (r1) -- (l1);
\draw (r1) -- (l2);
\draw (r1) -- (l4);
\draw (r2) -- (l2);
\draw (r2) -- (l3);
\draw (r3) -- (l3);
\draw (r4) -- (l3);
\draw (r4) -- (l5);
\draw (r5) -- (l5);
\draw (r6) -- (l7);
\draw (r5) -- (l6);
\draw (r5) -- (l8);
\draw (r7) -- (l5);
\draw (r6) -- (l5);
\draw (r8) -- (l7);
\draw (r9) -- (l9);
\draw (r9) -- (l10);
\draw (r10) -- (l11);
\draw (r10) -- (l8);
\draw (r8) -- (l10);
\draw (r8) -- (l8);
\draw (r8) -- (l11);
\draw (r11) -- (l3);
\draw (r11) -- (l6);
\draw (r11) -- (l7);
\draw (r11) -- (l9);
\draw (r11) -- (l11);
\end{tikzpicture}
\caption{The Bipartite Graph as Input}\label{image_input}
\end{figure}
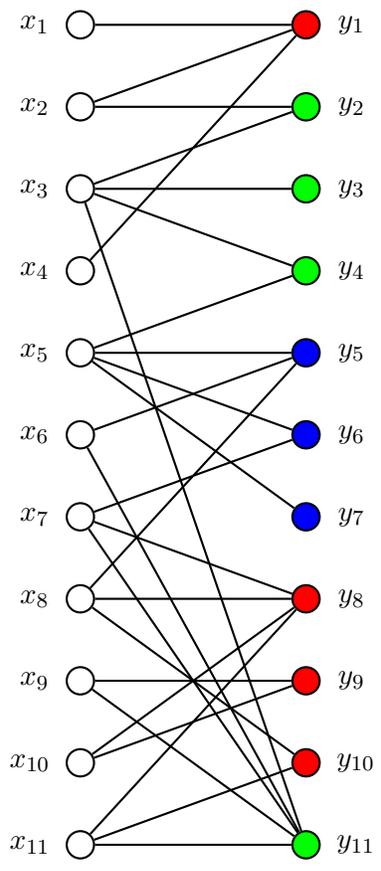

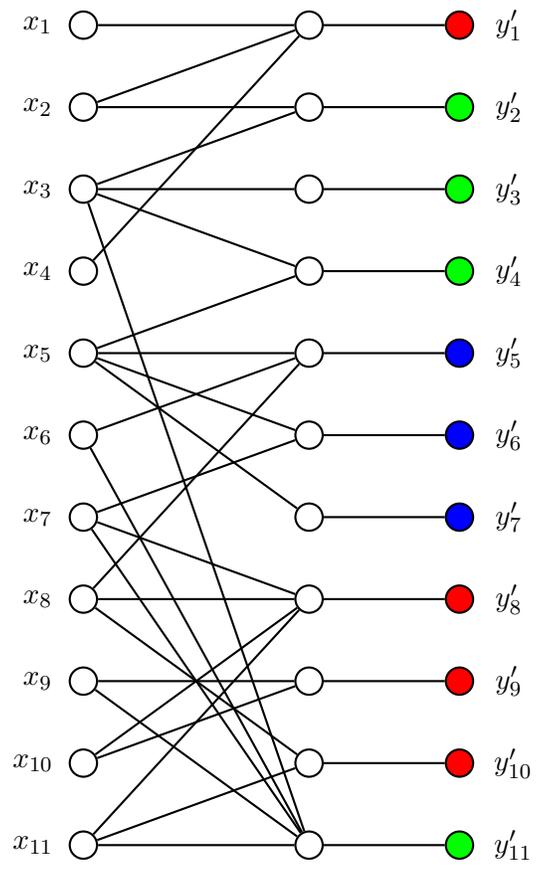
\begin{figure} 
\centering
\begin{tikzpicture}[thick,
  every node/.style={draw,circle},
  bnode/.style={fill=blue},
  gnode/.style={fill=green},
  rnode/.style={fill=red},
  blnode/.style={fill=lightblue},
  glnode/.style={fill=lightgreen},
  rlnode/.style={fill=lightred},
  wnode/.style={fill=white},
]
\begin{scope}[start chain=going below,node distance=7mm]
\foreach \i in {1,2,3,4}
  \node[wnode,on chain] (l\i) [label=left:$x_{\i}$] {};
\foreach \i in {5,6,7}
  \node[wnode,on chain] (l\i) [label=left:$x_{\i}$] {};
\foreach \i in {8,9,10,11}
  \node[wnode,on chain] (l\i) [label=left:$x_{\i}$] {};
\end{scope}
\begin{scope}[xshift=3cm,start chain=going below,node distance=7mm]
\node[wnode,on chain] (r1) {};
\foreach \i in {2,3,4}
  \node[wnode,on chain] (r\i) {};
\foreach \i in {5,6,7}
  \node[wnode,on chain] (r\i) {};
\foreach \i in {8,9,10}
  \node[wnode,on chain] (r\i) {};
\node[wnode,on chain] (r11) {};
\end{scope}
\begin{scope}[xshift=5cm,start chain=going below,node distance=7mm]
\node[rnode,on chain] (rr1) [label=right:$y'_1$] {};
\foreach \i in {2,3,4}
  \node[gnode,on chain] (rr\i) [label=right:$y'_{\i}$] {};
\foreach \i in {5,6,7}
  \node[bnode,on chain] (rr\i) [label=right:$y'_{\i}$] {};
\foreach \i in {8,9,10}
  \node[rnode,on chain] (rr\i) [label=right:$y'_{\i}$] {};
\node[gnode,on chain] (rr11) [label=right:$y'_{11}$] {};
\end{scope}

\draw (r1) -- (l1);
\draw (r1) -- (l2);
\draw (r1) -- (l4);
\draw (r2) -- (l2);
\draw (r2) -- (l3);
\draw (r3) -- (l3);
\draw (r4) -- (l3);
\draw (r4) -- (l5);
\draw (r5) -- (l5);
\draw (r6) -- (l7);
\draw (r5) -- (l6);
\draw (r5) -- (l8);
\draw (r7) -- (l5);
\draw (r6) -- (l5);
\draw (r8) -- (l7);
\draw (r9) -- (l9);
\draw (r9) -- (l10);
\draw (r10) -- (l11);
\draw (r10) -- (l8);
\draw (r8) -- (l10);
\draw (r8) -- (l8);
\draw (r8) -- (l11);
\draw (r11) -- (l3);
\draw (r11) -- (l6);
\draw (r11) -- (l7);
\draw (r11) -- (l9);
\draw (r11) -- (l11);
\draw (r1) -- (rr1);
\draw (r2) -- (rr2);
\draw (r3) -- (rr3);
\draw (r4) -- (rr4);
\draw (r5) -- (rr5);
\draw (r6) -- (rr6);
\draw (r7) -- (rr7);
\draw (r8) -- (rr8);
\draw (r9) -- (rr9);
\draw (r10) -- (rr10);
\draw (r11) -- (rr11);
\end{tikzpicture}
\caption{The Auxiliary Bipartite Graph}\label{aux_input}
\end{figure}

\section{Generative Models}

\subsection{Generating Random Instances}
To generate our random graph instances, we would like to admit parameters such that the three types of irregularities appear in different predetermined proportions. We assign variables to these three parameters:
\begin{itemize}
	\item Let $\omega$ be the target fraction of \textbf{wild nodes}.
	\item Let $\lambda$ be the target fraction of \textbf{mislabeled nodes}.
	\item Let $\alpha$ be the target fraction of \textbf{misattributed edges}.
\end{itemize}
We will introduce three models: the sequential model, the circle model, and the circle model with power law degree distributions. The latter two models attempt to emulate the ``geometric'' nature of real graphs: nodes are more likely to form new connections with the neighbors of their neighbors than with distant nodes in the graph. The third model attempts to emulate the tendency of real graphs to have a few nodes of very high degree and many nodes of small degree. The structure of real-world graphs is studied extensively in \cite{coh}. In \cite{fel}, circle models are used for graph visualization. It is natural to extend the idea to graph generation.

\subsubsection{The Sequential Model}
As the name suggests, in the sequential model we start with a conflict-free graph (with no irregularities) and introduce the irregularities sequentially. We start with this model since it gives us an intuitive sense of which order to introduce the irregularities. This intuition will carry over to later models.

For the sequential model, we proceed as follows:
\begin{enumerate}
	\item Generate $d$ disconnected, random bipartite graphs on vertex sets $L_i \cup R_i$, $1 \leq i \leq d$, where $d$ is the number of colors. There are many known algorithms for this task. Label the right nodes $R_i$ in bipartite graph $i$ with color $c_i$, where all the $\{c_i\}$
are distinct. Declare the disjoint union $L := \bigcup_{1 \leq i \leq d} L_i$ to be the set of left nodes.
	\item Introduce a set $R_0$ of wild nodes such that the total fraction of wild nodes is $\omega$. Connect these nodes to random left nodes in $L$.
The set of right nodes is $R := \bigcup_{0 \leq i \leq d} R_i$.
	\item For each edge in the graph, with probability $\alpha$ delete the edge and replace it with a new edge incident to the same right node and a random left node.
	\item For each right node in the graph, with probability $\lambda$ randomly assign the label of the node to a new color.
\end{enumerate}

The key benefit of this model is that it allows us to control precisely each parameter. The downside is that it ignores the geometric nature of the problem and the substructure we would expect on the bipartite subgraphs. We expect these instances to be relatively easy to solve since their irregularities are random and not structured as they will be in the circle model.

\subsubsection{The Circle Model}
The circle model attempts to emulate geometric graph constructions by placing the nodes on a circle, and determining connectivity as a function of physical proximity.

To create a graph with $C$ colors, first create a circle $Q$ of circumference $C$. Place two sets of points along $Q$ according to a predetermined probability distribution (typically uniform) to represent right and left nodes. These two distributions can be independent or dependent, as desired. The circle $Q$ can be naturally divided into $C$ segments and the location of each node determines its true color. We go through the right nodes and, with probability $\lambda$, give them a random color label instead of their true color label.

For each fixed right node $r \in R$, select randomly from the set of left nodes $L$ a neighbourhood $\Gamma_L(r)$ for $r$ as follows. With probability $\omega$ our selection is uniformly random. With probability $1-\omega$ it is biased towards nodes that are nearby; to achieve this, a node at a distance $\delta$ is selected with probability proportional to some non-increasing function of distance $f(\delta)$. Reasonable choices for $f$ include an exponential decay function, a step function, or a threshold function ($f(\delta) = 1_{\{ \delta < \bar{\delta} \}} $). The upside of using a threshold is that it can speed up computation by limiting the number of left nodes examined.

Note that we did not explicitly incorporate $\alpha$ into our model. The number of misattributed edges will depend on the slope of $f$. The only way to express it precisely would be with a double integral over the circle, which may be difficult to calculate efficiently. If we want $\alpha$ small, then we should choose an $f$ which decreases quickly so that right nodes will be biased towards nearby left nodes. On the other hand, if we want $\alpha$ large, then we should choose an $f$ which decreases slowly so that faraway left nodes sill have a chance of being included in the neighborhood of right nodes.

The key benefit of this model is that it constructs difficult instances where the irregularities, especially the misattributions, arise from an underlying physical process. This means that a right node with misattributions is likely to have several misattributions of the same type. It is also likely to be near other right nodes with similar misattributions. Wild nodes can be thought of as nodes in the center of the circle, equidistant from all the left nodes. The downsides of this model are that the construction is slow (since we examine all pairs of right and left nodes) and it does not allow us to control $\alpha$ explicitly.

\subsubsection{Circle Model with Power Law Distribution for Colors and Right Vertex Degrees}

In some applications of interest, the color distribution of right nodes may have heavy tails, such as those given by Zipf's Law. Furthermore, while the degree distribution of left nodes is approximately Poisson, the degree distribution of right nodes may also have heavy tails. We will modify the Circle Model to better reflect these power law distributions.

Our first modification is to distribute the colors according to a preferential attachment model. We place the nodes sequentially. When deciding on the color of a node $k$, we consider the existing color distribution. The probability of giving node $k$ the color $c_j$ is directly proportional to
\[
|\{r_i \in R|i<k,c(r_i)=c_j\}| + \chi,
\]
where $\chi$ is a small constant. For example, consider the case where $\chi = 0.25$ and there are $40$ colors. The first node is given a color uniformly at random. The second node now has probability $\frac{1 + 0.25}{1 + 40 \cdot 0.25}$ of getting assigned the same color as the first. Once we have decided on the color of a node, we give it a location uniformly at random on the circle within the region of that color. Note that in this preferential attachment model, the order in which nodes are selected does not affect the final distribution. For convenience, we place all the right nodes, then the left nodes. It can be shown that the probability a color is not represented by any node tends to $0$ as the number of nodes tends to infinity.

Next, we decide on the neighbors of nodes similar to the Circle Model. In this case, we proceed in two steps. First, for each left node we select a small constant number of neighbors (ex. one or two). Next, for each right node we sample from a heavy tail distribution $Z$, such that $p(Z > z) = \frac{1}{z}$, and give the right node that many neighbors. If the right node is wild and was assigned an edge in the first step, we randomly reassign that edge. Note that both steps are necessary. If we do not first select two neighbors of each left node, then many left nodes will end up without neighbors. If we do not reassign edges that linked to wilds in the first step, then our wild nodes will end up being bias towards linking to the same color, making them not really wild. The random assignments, themselves, takes place according to the same $f$ as in the basic Circle Model.

\subsection{Measuring the Difficulty of the Problem}
\subsubsection{Difficulty Metrics}
In a graph where we are given the true labels, we can easily calculate $\omega$, $\lambda$, and $\alpha$. We would like to develop some metrics to measure the difficulty of test instances where these parameters are unknown. A few graph metrics of interest:

\begin{enumerate}
	\item The \textbf{color distribution} of the right nodes. Generally, if the number of nodes per color follows
a multinomial$(|R|, (1/d, 1/d, \ldots, 1/d))$ distribution,
 the problem will be easier to resolve than when the graph has a very asymmetric color distribution.
	\item The \textbf{degree distribution} of the left and right nodes. This includes the average degree and the maximum degree. Generally, the problem becomes more difficult as the average degree of nodes decreases and as the distribution of degrees becomes less uniform. In realistic applications, the distribution of right node degrees may follow a power law.
	\item The \textbf{component distribution} of the whole bipartite graph. Without additional a-priori assumptions, it is not possible to infer information between connected components. In realistic applications, the graphs tend to consist of a large connected component with a majority of the nodes and a few small, disconnected components.
	\item Average \textbf{color degree} of left nodes. That is, the average number of colors observed by a left node in its neighbors. Formally, the number of colors adjacent to $\ell$ is the cardinality of the set $\{\tilde{c}(r), r \in \Gamma_R(\ell) \} \subset C$.
The larger this value, the more disagreement exists in the initial graph coloring.
	\item The number of $2$-step paths starting and ending at the same color. This gives us a sense of the \textbf{SNR} (Signal-to-Noise Ratio), since bichromatic paths are the ``noise'' we are trying to eliminate. In graph theory terms, this gives a sense of the amount of connectedness between
what should be disconnected color components.

\end{enumerate}

In a graph where the values of $\omega$, $\lambda$, and $\alpha$ are known, we can estimate the SNR as: $$((1-\lambda)(1-\alpha)(1-\omega))^2.$$ Note that this is a lower bound since it esimates the percentage of the time that a $2$-step path will step between two tame, correctly labeled nodes along properly attributed edges. In reality, the number of monochromatic $2$-step paths will be larger since there are cases where it could include paths between wild nodes or mislabeled nodes along misattributed edges.

\subsubsection{Sample Instances}
The following table gives a rough sense of what our metrics look like on graphs of different sizes generated by different methods.
It gives the average and maximum of the left and right vertex degree, and the SNR as defined above.

\begin{footnotesize}
\begin{center}
\begin{tabular}{|l|c|c|c|c|c|c|c|c|}
\hline
SetID & Edges & LAvg & LMax & RAvg & RMax & ColorDeg & Comps & SNR \\
\hline
Circle Small & 19449 & 3.81 & 15 & 11.44 & 25 & 2.17 & 41 &  0.56 \\
Circle Large & 96793 & 3.8 & 14 & 11.39 & 27 & 1.23 & 219 &  0.89 \\
Power Small & 8590 & 2.86 & 10 & 8.59 & 128 & 1.75 & 70 & 0.59 \\
Power Large & 11093 & 3.70 & 15 & 11.09 & 9116 & 2.35 & 318 & 0.45 \\
\hline
\end{tabular}
\end{center}
\end{footnotesize}

We make a few observations. First, the degrees of the left nodes are relatively small. 
On the right, the power law distribution gives us large maximum degrees without varying the average sigificantly.
More importantly, making the circle graph larger improves SNR. In the power law graph, the SNR decreases as the graph grows.
A graph with $\text{SNR} \approx 0.9$ would be considered an easy problem instance, while an $\text{SNR} \approx 0.4$ would be considered a difficult problem instance.
A graph with $\text{SNR} \approx 0.6$ would be considered a medium difficulty instance.

\section{Algorithms Overview}
Before exploring our algorithms in detail, we give a chart summarizing their properties. The properties are:
\begin{itemize}
	\item \textbf{Global Objective:} Does the algorithm optimize a well-defined global objective function (usually by local computation)?
	\item \textbf{Message Passing:} Can the algorithm be implemented in a message-passing context? A message-passing algorithm is one in which each node only knows its state and its neighbors. At each iteration, it sends a message based on its state and updates its state based on the messages it recieves.
	\item \textbf{Message Size:} Message-passing algorithms could either send a message which is a single color or a vector of probabilities for each color.
	\item \textbf{Termination Optimal:} Does the final result necessarily optimize the global objective?
	\item \textbf{Output Class:} The final output of the algorithm could be either a single color judgement for each node or a vector of probabilities for each color.
	\item \textbf{Randomized Results:} If the algorithm is run again on the same data, could it give a different result?
\end{itemize}

\begin{footnotesize}
\begin{tabular}{|l|cccccc|}
\hline
Algorithm & Global & MPass & Msg & TermOpt & Output & Random\\
\hline
Voting 			& Yes & Yes & Color & No & Color & No \\
Gradient Descent 	& Yes & Yes & Vector & Yes & Vector & No \\
Minimum Cut 		& Yes &  No & N/A & Yes & Color & No \\
Naive Bayesian		&  No & Yes & Vector & N/A & Vector & No \\
Harmonic Function		& Yes & Yes & Vector & Yes & Vector & No \\
Multinomial Bayes	&  No & Yes & Color & N/A & Vector & Yes \\
Neural Network		&  No &  No & N/A & N/A & Color & Yes \\
\hline
\end{tabular}
\end{footnotesize}

\section{Semi-Definite Programming Formulation}

\subsection{Background}
Semi-Definite Programming is a special case of Convex Programming. For a comprehensive review, see Vandenberghe and Boyd \cite{vand}. We choose to investigate Semi-Definite Programming for two reasons. First, because the dot product is a reasonable measure of agreement between two probability distributions. Semi-definite programming is well suited to dealing with dot products in objective functions and constraints. Second, we are inspired by the application of semi-definite programming to approximating the maximum cut problem, where agreement between nodes is naturally modeled with a vector dot product \cite{mgeo}.

\subsection{Objective Function}
If we have $d$ colors, let $r^k = (r_1^k,\ldots,r_d^k)$, with $\sum_i{r_i^k} = 1$, be a $d$-dimentional vector of probabilities, where $r_i^k$ is the probability that right node $v_k$ has color $i$. We define $\ell^j$ similarly for left node $u_j \in L$. We encode the initial coloring of the right node $v_k$ in the vector $h^k$, which is the standard basis vector of $\mathbf{R}^d$ corresponding to the initial color.

Our SDP programming formulation objective will have two terms: the separation and deviation terms. The separation term rewards agreement between adjacent right nodes and left nodes. The deviation term rewards agreement between right nodes and their initial colorings. We are ultimately attempting to strike a balance between honoring our prior beliefs and maximizing agreement. We write the objective function:
\begin{equation}\label{eq:sdpvoting}
\max \left( \sum_{\{u_j, v_k\} \in E}{\ell^j \cdot r^k} + \sum_{v_k \in R}{\tau_k h^k \cdot r^k} \right)
\end{equation}
where $E$ refers to the edge set of the bipartite graph, subject to
\[
\ell^j \cdot \mathbf{1} = 1 \forall j; \quad  r^k \cdot \mathbf{1} = 1 \forall k.
\]

The term on the right is the separation term and the term on the left is the deviation term. The meta-parameters $\tau_k$ represent the strength of the prior at a node and are used to balance the relative importance of the two terms. The larger the value of $\tau_k$, the more importance is given to our prior on a node. If all the $\tau_k$ are too small, then it becomes optimal to set all the $\ell^j$ and $r^k$ equal. If the $\tau_k$ are too large then we end up assigning every right node to its prior.

\subsection{Algorithm}
It turns out that the objective above can be interatively increased by a simple hill-climbing algorithm based on majority votes. The algorithm terminates in a finite number of iterations. We describe the algorithm and give a proof of termination. A key lemma:

\begin{lemma}
If we fix the values of the left or right nodes ($\ell^j$ or $r^k$), then the problem becomes separable in the remaining terms. The remaining variables have an integer optimal solution.
\end{lemma}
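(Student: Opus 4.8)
The plan is to exploit the bilinearity of the objective in \eqref{eq:sdpvoting}. Suppose we fix all the right-node vectors $r^k$ (the case of fixing the left-node vectors is entirely symmetric). Then the deviation term $\sum_{v_k \in R} \tau_k h^k \cdot r^k$ is a constant and can be dropped. The remaining separation term $\sum_{\{u_j,v_k\}\in E} \ell^j \cdot r^k$ is linear in the free variables $\ell^j$. Grouping the edges incident to each left node $u_j$, I would rewrite it as $\sum_j \ell^j \cdot w^j$, where $w^j := \sum_{v_k \in \Gamma_R(u_j)} r^k$ is a fixed vector obtained by summing the color distributions of the right neighbors of $u_j$.

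Separability then follows immediately: no two distinct left nodes share a variable, and the only constraint binding each $\ell^j$ is its own simplex constraint $\ell^j \cdot \mathbf{1} = 1$, $\ell^j \geq 0$. Hence the problem decouples into $|L|$ independent subproblems, one per left node, each of the form $\max\; \ell^j \cdot w^j$ subject to $\ell^j$ lying in the probability simplex $\Delta_d$.

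For the integrality claim I would invoke the standard fact that a linear functional over a bounded polytope attains its maximum at a vertex. The vertices of $\Delta_d$ are exactly the standard basis vectors of $\mathbf{R}^d$, which are integer; concretely, the optimum is attained by placing all mass on any coordinate in $\operatorname{argmax}_i w^j_i$. This recovers the ``majority vote'' interpretation promised in the text: the best label for $u_j$ is the color that accumulates the most weight among its right neighbors. Fixing the $\ell^j$ instead gives the symmetric subproblems $\max\; r^k \cdot v^k$ over $\Delta_d$, with $v^k := \sum_{u_j \in \Gamma_L(v_k)} \ell^j + \tau_k h^k$; the prior term $\tau_k h^k$ simply adds weight $\tau_k$ to the initial color before the same argmax is taken.

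The only point requiring care --- and the one I would flag as the main obstacle --- is the role of nonnegativity. The displayed constraints in \eqref{eq:sdpvoting} list only $\ell^j \cdot \mathbf{1} = 1$, but the integrality (indeed the very existence of a maximum) depends on also enforcing $\ell^j \geq 0$, i.e.\ on $\ell^j$ being a genuine probability vector rather than an arbitrary point of the affine hyperplane; without it the linear functional $\ell^j \cdot w^j$ is unbounded. Once the feasible set is the full simplex, boundedness holds, the vertex argument applies, and ties in the argmax cause no difficulty, since we need only the existence of one integer optimum, not its uniqueness.
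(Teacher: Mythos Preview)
Your proof is correct and follows essentially the same approach as the paper: regroup the bilinear objective by node to obtain decoupled linear subproblems over the simplex, then invoke linearity to conclude that an optimum occurs at a standard basis vector. Your explicit vertex-of-polytope phrasing and your observation that the nonnegativity constraint $\ell^j \geq 0$ (implicit in ``vector of probabilities'' but omitted from the displayed constraints) is essential for boundedness are both more careful than the paper's own argument.
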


\begin{proof}
We inspect the objective function (\ref{eq:sdpvoting}). 
If we fix the right nodes, we can rewrite the objective function (at least, the part not already fixed) as
a linear map
\[
\sum_{u_j \in L} \ell^j \cdot \left(\sum_{v_k \sim u_j}{ r^k}  \right)
\]
If we fix the left nodes, we can rewrite the objective function as a linear map
\[
\sum_{v_k \in R}  r^k \cdot \left(\tau_k  h^k + \sum_{u_j \sim v_k}{ \ell^j} \right)
\]
In both cases, we can separate the outer sum and optimize over the individual $\ell^j$ or $r^j$,
subject to the linear constraints.
Furthermore, if the optimal solution for some $\ell^j$ or $r^j$ has fractional entries, then by linearity there is necessarily a solution of the form $\ell^j = e_i$ or $r^j = e_i$ for some $i$.
\end{proof}

Note that we did not require the $h^k$ to be integral for our proof to work. The lemma will hold even if we introduce fractional priors.

Formally, the voting algorithm is as follows: for each $u_j \in L$, consider $\Gamma_R(u_j)$. Set $\ell^j$ equal to $e_i$ where $i$ maximizes 
\[
\sum_{u_j \sim v_k}{e_i \cdot r^k}.
\]
For all $v_k \in R$, consider $\Gamma_L(v_k)$. Set $r^k$ equal to $e_i$ where $i$ maximizes 
\[
e_i \cdot \left(\tau_k  h^k + \sum_{u_j \sim v_k}{ \ell^j} \right).
\]
Continue this procedure until the objective function is no longer improving.

Intuitively, at each step we take the ``majority vote'' of the colors of the neighbors of a node. If we are at a right node, then the prior also gets $\tau_k$ votes.

As a direct consequence of the previous lemma, the objective improves at each step in the voting algorithm. Furthermore, the objective is always an integer. Since the objective has a natural upper bound, this is sufficent to prove termination (though not necessarily optimality).

\subsection{Parameters/Wildness}
We have not yet determined any rigorous criterion for selecting $\tau_k$. Recall that at each step the node $v_k \in R$ sees a number of votes equal to its degree. It is reasonable to consider values for $\tau_k$ which depend on that degree. In particular, we suggest setting $\tau_k$ to $0.25$ times the degree of $r^k$.

We suggest an ad hoc approach for determining if a node is wild in the voting algorithm. If at any stage the plurality vote accounts for less than half of all votes (i.e. a majority), then we set the node to wild instead of to the plurality color. Once a node is marked as wild, it does not get to vote on its neighbors. The threshold of one half is arbitrary.

\subsection{Summary}
While the Voting Algorithm appears relatively naive, its presence here illustrates that it iteratively increases a fairly natural, rigorously-defined objective arising from the problem. And it does so by passing small messages. It should not be discounted for its simplicity.

\section{Quadratic Model and Gradient Descent Formulation}

\subsection{Background}
For large optimization problems in which finding an exact optimal may be difficult, gradient descent is a frequent method of choice \cite{bott}. 
In our case, we would like to state and optimize a simple linear best-fit objective with constrained variables.
Our objective is convex. We will show that, in our bipartite setting, gradient descent can be easily paralellized and implemented in a message-passing algorithm.
This addresses the well-known challenge of parallelization in gradient descent (and stochastic gradient descent) \cite{niu}.

\subsection{Objective Function}
First consider the binary version of the problem in which there are two colors, black and white.
. The probability a left vertex $u_i$ is colored black is $x_i$,
where $0 \leq x_i \leq 1$. The prior on a right vertex $v_j$ is $y_j \in \{0,1\}$, where $y_j = 1$ means that right vertex $v_j$ was initially assigned color black. We will attempt to minimize the total of a loss function that we will calculate at each right node. The loss function on each right node $v_j$ will 
assign a quadratic penalty to deviation from its prior. Formally, the loss at $v_j$ will be 
\begin{equation} \label{e:rightloss}
\left(\frac{\sum_{u_i \sim v_j}{x_i}}{d_j} - y_j \right)^2,
\end{equation}
where $d_j$ is the degree of node $v_j$. Alternatively the effect of the priors could be made weaker by setting $y_j \in \{0, \rho\}$ for
$\rho<1$.
This is a constrained least squares minimization of the form: choose $X:= (x_i) \in [0,1]^{|R|}$ to minimize
\begin{equation} \label{e:lsm}
(U X - Y)^T (U X - Y)
\end{equation}
where $Y:= (y_j)$, and $U:=(u_{j, i})$, with $u_{j, i}:= 1_{\{u_i \sim v_j\}} d_j^{-1}$; in other words, $U$ is the adjacency matrix, 
rescaled by dividing by right vertex degrees. 

Without the linear constraint there would be an explicit minimizer
$
X = (U^T U)^{-1} U^T Y.
$
The linear constraint that each $x_i \in [0,1]$
makes this a convex quadratic programming problem.
Exact methods have complexity at least $O(|R|\cdot |E|)$, where $|E| = \sum_{i, j} 1_{\{u_i \sim v_j\}}$ is the edge count,
because this is the cost of the unconstrained problem. This is too expensive for big graph applications.
 We present an approximate $O(|E|)$ message passing algorithm.

\subsection{Algorithm}

\subsubsection{Message Passing}
Differentiate (\ref{e:lsm}) with respect to $X$ to obtain the matrix of partial derivatives:
\[
2 (U X - Y)^T U
\]
Split this derivative into a sum. The first summand, the map $X^T \mapsto 2 X^T (U^T U)$, has an effect at left vertex $u_j$ 
which is the sum over all two-step paths from left vertex $u_i$ to $u_j$  of the value $x_i$, 
divided by the square of the degree of
the intermediate right node. The first summand has the effect of propagating the current labelling of left vertices which share a neighbor
with $u_j$. On the other hand the second summand $-2 Y^T U$ is pulling the value $x_j$ at $u_j$
back towards the prior at each of the neighboring right vertices, with a strength inversely proportional to the 
degree of that right neighbor.

\textbf{Message Passing Interpretation: }
The gradient at left node $u_i$ can be computed by summing, across its right neighbors $v_j$, the values 
\begin{equation} \label{e:mplsm}
\frac{2}{d_j} \left( \frac{\sum_{u_{i'} \sim v_j }{x_{i'}}}{d_j} - y_j \right).
\end{equation}
We can calculate this value at each $v_j$ and then pass along the message to the adjacent left nodes. 
If the increment to $x_i$ places the new value outside $[0,1]$, 
in the black and white version we round to the nearer of $0$ and $1$.
After the left nodes update their value, they can pass their value back along to the right nodes in the next iteration.

\subsubsection{Convergence}
Like every quadratic programming problem with linear inequality constraints, this is a convex optimization problem.
Every iteration of the message passing algorithm (\ref{e:mplsm}) pushes the objective function (\ref{e:lsm})
towards the unique global optimum.

\subsubsection{Renormalized Multicolor Version}
The rule (\ref{e:mplsm}) constructs a message for a single color.
Assume there are at least 3 colors. Separate messages may be constructed for each color.
After a left node $u_i$ receives a message for each color $k$, a value of $x_i^k$ is updated. 
In the \textbf{renormalized} version of the algorithm, set negative values to
zero, and then divide each $x_i^k$ by $\sum_k x_i^k$ to obtain a probability distribution across colors at node $u_i$.

\subsection{Parameters/Wildness}
In the multicolor version without renormalization, our $x_i$ are determined separately for each color. 
In the renomalized multicolor version, the output is a probability distribution across colors for each node. 
In either case, we have two options. We could examine the total loss (\ref{e:rightloss}) at each right node. 
Nodes with larger loss are more likely to be wild or mislabeled. 
Alternatively, we could set a threshold $\tau$ and color nodes if their largest $x_i$ is over $\tau$, leaving the rest wild.

The benefit of this gradient algorithm is that its plain version, without $\rho_j$, does not involve any arbitrary choices of parameters before label judgement. 
We mentioned the possibility of setting a parameter $\rho_j$. As in the other algorithms, the closer this term is to $1$, the stronger the prior.

\subsection{Summary}
In this section, we have essentially applied a linear best-fit model to our problem of interest. 
We have shown that the model, despite being constrained, can be optimized with gradient descent on our bipartite graph.

\section{Minimum Cut Formulation}

\subsection{Background}
Purely combinatorial algorithms have some advantages over statistical approximation algorithms: they do not depend on several parameters for accuracy and convergence. They are also easier to prove valid in theory. Unfortunately, many combinatorial optimization problems are intractable on large graphs. Attempts have been make to apply tractable combinatorial problems to clustering \cite{xwang} and multi-way partitioning \cite{kary}. Darling et al. \cite{dar} embed such problems in a general combinatorial data fusion framework.

Recent work has shown that the multi-way partitioning problem can be solved in polynomial time on special graphs \cite{velednitsky2018polynomial} and that a tailored branch-and-bound algorithm can solve instances with tens of thousands of nodes in just seconds \cite{velednitsky2018isolation}.

The model we will use in this section is a special case of what is sometimes called the ``minimum s-excess'' problem (terminology in \cite{hoch}), which in turn is a relaxation of the ``minimum closure'' problem.

\subsection{Objective Function}
Consider first a binary coloring problem (color $c$ or not $c$).
Associate a binary indicator variable $x_i$ with each node $i$ (both left and right) of our bipartite 
graph $G$.  Ultimately, $x_i = 1$ if node $i$ should be colored $c$ and $x_i = 0$ otherwise. Let $R_c$ be the set of right nodes initially labeled $c$. Given penalty parameters $\pi_i^0$ and $\pi_i^1$, we would like to minimize the following objective:

\begin{align*}
	\text{minimize} & & \sum_{R_c}{\pi_i^0 (1 - x_i)} + \sum_{R\setminus R_c}{\pi_i^1 x_i} + \sum_{i\sim j}{1_{\{x_i \neq x_j \}}} \\
	\text{subject to} & & x_i \in \{0, 1\} \\
\end{align*}
If we change $x_i$ from its prior, namely $x_i:=1_{\{i \in R_c \}}$, then we pay a penalty. In the case of the nodes in $R_c$, this means changing them to $0$ and paying penalty $\pi_i^0$. In the case of the right nodes not in $R_c$, this means changing them to $1$ and paying penalty $\pi_i^1$. In the case of the left nodes, there is no prior so no penalty is paid either way.
We pay a unit penalty if two adjacent nodes disagree with each other.

\subsection{Algorithm}
Consider the following construction:

Take the graph $G$ and add a source and sink node, $s$ and $t$ respectively. Draw an edge from $s$ to every $r \in R_c$ with weight $\pi_i^0$. Draw an edge from every $r \in R \setminus R_c$ to $t$ with weight $\pi_i^1$. Call this graph $G_{s,t}$. 

Given a cut in $G_{s,t}$, let vertex sets $S$ and $T$ be components containing the source and sink nodes, respectively. 
We can calculate the price of this cut in $G_{s,t}$ as follows:
\begin{enumerate}
\item
If any node $r \in R_c$ is in the sink set $T$, we must have paid a penalty of $\pi_i^0$ to cut the edge from $s$ to $i$. 
\item
By similar logic, we must have paid a penalty of $\pi_i^1$ for any node $r \in R\setminus R_C$ in the source set $S$. 
\item
We paid a price of $1$ for every other edge that was originally in $G$ and was cut between $S$ and $T$.
\end{enumerate}

The price of the cut is evidently the same as the value of the objective function above when $x_i = 1$ for $i \in S \backslash \{s\}$ and 
$x_i = 0$ for $i \in T \backslash \{t\}$.
Thus, the minimum cut in $G_{s,t}$ gives us the optimal solution to the problem where the only color choices are $c$ or not $c$.

\subsection{Parameters/Wildness}
What is the effect of the value of the penalties $\pi_i^0$ and $\pi_i^1$? 
The larger $\pi_i^0$, the more importance we give to priors (the harder it is to disregard the prior). 
The larger $\pi_i^1$, the more difficult it is to relabel a node to a new color. 
If $\pi_i^0$ is too large, then we necessarily accept the priors. 
If $\pi_i^1$ is too large, then we never assign new values to nodes.
If $\pi_i^0$ or $\pi_i^1$ is too small, then 
we either assign $0$ to all the nodes and pay a penalty of $\sum_{i \in R_c} \pi_i^0$, or else 
we either assign $1$ to all the nodes and pay a penalty of $\sum_{i \in R \backslash R_c} \pi_i^1$, whichever is cheaper.

Experimentally, the best results seem to be obtained when $\pi_i^0$ is slightly larger than $\pi_i^1$.
To gain some intuition for why $\pi_i^0 = \pi_i^1$ does not produce optimal results, consider the case of just two colors.
By symmetry, the two cuts (one for each color) would be compliments of each other. This means that we would be unable to detect wildness.
If $\pi_i^0 \neq \pi_i^1$, then certain nodes ``on the edge'' will be assigned $0$ or $2$ colors, making it possible to detect wilds.
With this in mind, we propose as a heuristic that $\pi_i^0$ be set to three quarters the degree of $i$ and $\pi_i^1$ be set to half the degree of $i$.

We can run this algorithm once for each color and consolidate the results. Most nodes will be in the source set for just one of the $d$ cuts. 
These nodes are assigned to that color, often their initial color.
The remaining nodes are either in zero source sets or more than one source sets. We label these nodes as wild. The ones in zero source sets we can think of as ``true'' wilds, while the ones in multiple source sets are potentially polychromatic border nodes.

\subsection{Summary}
It is rare that a large-scale data mining problem can be solved with a simple combinatorial algorithm. In this case, we have shown that our problem of interest admits an intutive minimum cut formulation. Being purely combinatorial, it does not require us to specify a termination condition or desired degree of accuracy for convergence.


\section{Bayesian Naive Formulation}

\subsection{Background}
Our ``Naive Bayesian'' formulation is motivated by Naive Bayes classification. In general Naive Bayes classification, a parameter is estimated to be in one of several classes based on evidence. Its posterior probability of being a member of each class is calculated and then it is assigned to the class in which it has the highest probability. In this algorithm, we successively apply Naive Bayes classification to one set of nodes based on the other. We do not extract a final classification from the probability distribution until the end.

The procedure of iteratively performing Bayesian updates in a graph context is very well studied and termed Belief Propagation \cite{yedi}. Technically, we are working in a Markov Random Field since our graph contains cycles, but in practice we are iteratively updating one set of nodes in terms of the other (left or right), so we could think of our graph as the union of two Bayesian Networks. There is extensive literature covering Belief Propagation in Bayesian networks, including foundational complexity results \cite{coop} and several books \cite{kja,jen}.

\subsection{Algorithm}

\subsubsection{Motivation}
Consider a single left or right node $v$. 
In order to accept color $c$ as valid for node $v$, consider its neighbors $v'$. One of three properties must hold:
\begin{enumerate}
	\item $v'$ is also colored $c$.
	\item $v'$ is wild color $\mho$.
	\item $v'$ is neither $c$ or $\mho$, but the edge $\{v, v'\}$ is a misattribution.
\end{enumerate}

\subsubsection{Bayes Formula Under Conditional Independence Approximation}
Let $\chi: L \cup R \to C \cup \{\mho\}$ denote a coloring of all nodes, which ideally would coincide with $c:R \to C\cup \{\mho\}$
 on the right nodes, and with $\bar{c}:L \to C\cup \{\mho\}$ on the left nodes. 
In the Bayesian belief-propagation scheme, this unknown map $\chi$
is treated as random, and so we can make statements about the joint distribution of neighboring colors $\{\chi(w),  w \in \Gamma(v)\}$,
for a node $v$, and about the conditional distribution of $\chi(v)$, given $\{\chi(w),  w \in \Gamma(v)\}$.
Our model for the probability distribution of $\chi$ starts with the assumption that ideally edges should exist only
between right and left nodes of the same color, then successively introduces the types of errors and adjusts our scheme to compensate.

The probability $v$ has color $i$, given the colors of its neighbors, can be written using Bayes' formula.
Here $I_w \subset C$ denotes some set of colors, for each node $w$.
\begin{small}
\begin{equation} \label{e:nbf}
P \left(\chi(v)=i \mid \bigcap_{w \in \Gamma(v)} \{ \chi(w) \in I_w  \} \right)
 \propto P \left(\bigcap_{w \in \Gamma(v)} \{ \chi(w) \in I_w  \}  \mid \chi(v)=i \right) P(\chi(v) = i).
\end{equation}
\end{small}
In the following subsections we introduce and progressively update our
scheme for determining the conditional distribution of $\{\chi(w),  w \in \Gamma(v)\}$, given $\chi(v)$.

If our assumption is that adjacent nodes must have the same color, without exception, then our update term becomes
\begin{equation} \label{e:nbfupdate}
P \left(\bigcap_{w \in \Gamma(v)} \{ \chi(w) \in I_w  \}  \mid \chi(v)=i \right) = P \left(\bigcap_{w \in \Gamma(v)} \{ \chi(w) = i \}  \mid \chi(v)=i \right)
\end{equation}

We use the \textbf{conditional independence approximation} to the probability of a monochromatic neighborhood:
\begin{equation} \label{e:cia}
P \left(\bigcap_{w \in \Gamma(v)} \{ \chi(w) = i \}  \mid \chi(v)=i \right) 
= \prod_{w \in \Gamma(v)}{P\left(\chi(w) = i \mid  \chi(v)=i\right)}.
\end{equation}

To illustrate why this is reasonable, consider a node $v$ and its neighbors $w \in \Gamma(v)$. We assume that if $v$ is colored red then all of its neighbors $w$ must be red. The color allocations of these right neighbors are treated as conditionally independent, given the color of $v$.
Therefore the probability that $w_j$ is red for all $w_j \in \Gamma(v)$ equals the product over $j$
of the probability that each individual $w_j$ is red.

\subsubsection{Including Wildness in the Bayesian Formulation}
The next step in the Bayesian algorithm is to account for the probability that a node is assigned the wild color $\mho$.
First assign a color prior $(\pi_0, \pi_1, \dots, \pi_d)$, where $\pi_0$ is the proportion $\omega$ of wild color vertices,
and for $i \geq 1$, $\pi_i$ is the proportion of vertices whose true color is $i$. 

\begin{lemma} \label{l:wildBayes}
Given that $v$ is wild,
that is $\chi(v) = \mho$, the conditional law 
\[
P \left(\bigcap_{w \in \Gamma(v)} \{ \chi(w) = i_w \}  \mid \chi(v) = \mho \right) 
\]
of $\{\chi(w),  w \in \Gamma(v)\}$ takes the form
\begin{equation} \label{e:priorprod}
\prod_{w \in \Gamma(v)} \pi_{i_w}
\end{equation}
for any values $i_w \in \{0, 1, 2, \ldots, d\}$. Take $p^w_i:= P(\chi(w) = i)$ for $i \geq 1$, and $p^w_0:= P(\chi(w) = \mho)$, so
the probability vector $(p^w_0, p^w_1,\ldots,p^w_d)$
encodes the law of $\chi(w)$ for $w \in \Gamma(v)$. 
Under the conditional independence approximation, the conditional probability
that $v$ is wild, given the law of $\chi(w)$ for $w \in \Gamma(v)$, is given by:
\[
P \left( \chi(v) = \mho  \mid \bigcap_{w \in \Gamma(v)} \{ \chi(w) \} \right)
\propto \prod_{w \in \Gamma(v)} \left( 
\sum_{i=0}^d \pi_i p^w_i \right).
\]
\end{lemma}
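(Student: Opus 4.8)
The plan is to treat the two assertions of Lemma~\ref{l:wildBayes} in turn, since the first is essentially definitional and feeds directly into the second.

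First I would establish the product form \eqref{e:priorprod}. By the definition of wildness in Section~\ref{typesOfIrregularities}, a wild node $v$ is attached to uniformly random neighbors, so conditioning on $\chi(v)=\mho$ carries no information about which left nodes $v$ happens to meet beyond the ambient population statistics. Hence each neighbor $w \in \Gamma(v)$ independently carries a color drawn from the population color distribution, which is exactly the prior $(\pi_0,\pi_1,\ldots,\pi_d)$; that is, $P(\chi(w)=i_w \mid \chi(v)=\mho)=\pi_{i_w}$. Independence of the attachments across distinct neighbors then yields
\[
P \left(\bigcap_{w \in \Gamma(v)} \{ \chi(w) = i_w \} \mid \chi(v) = \mho \right) = \prod_{w \in \Gamma(v)} \pi_{i_w},
\]
as claimed. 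This is the analogue, for a wild center, of the conditional independence approximation \eqref{e:cia}, the difference being that the common color $i$ there is replaced by the free labels $i_w$ governed by the prior.

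For the second assertion I would apply the Bayes identity \eqref{e:nbf} with the target event $\chi(v)=\mho$. The prior factor is $P(\chi(v)=\mho)=\pi_0$, a constant that does not depend on the neighbor evidence. The work is in the likelihood term $P(\text{neighbor laws} \mid \chi(v)=\mho)$: here we do not observe the neighbors' colors outright but only their laws $(p^w_0,\ldots,p^w_d)$, so I would marginalize over the unknown true colors, weighting the conditional law from the first part by each neighbor's belief. Concretely, summing the expression $\prod_w \pi_{i_w}$ against $\prod_w p^w_{i_w}$ over all color assignments $(i_w)_{w \in \Gamma(v)}$ gives the likelihood, and the distributive law factors this joint sum over tuples into a product of single-neighbor sums,
\[
\sum_{(i_w)} \prod_{w \in \Gamma(v)} \pi_{i_w}\, p^w_{i_w} = \prod_{w \in \Gamma(v)} \left( \sum_{i=0}^d \pi_i\, p^w_i \right).
\]
Multiplying by the prior $\pi_0$ gives the full posterior $\pi_0 \prod_{w}\bigl(\sum_i \pi_i p^w_i\bigr)$; the stated form records this up to the constant $\pi_0$ via the proportionality sign.

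The main obstacle is conceptual rather than computational: one must be careful about what it means to condition on the \emph{law} of $\chi(w)$ rather than on a definite realized color. The step that deserves scrutiny is treating each neighbor's belief vector $p^w_i$ as a likelihood weight inside the marginalization, which is precisely the belief-propagation convention and implicitly relies on the conditional independence approximation to let the joint sum over tuples factor into the displayed product of single-neighbor sums. I would also take care to state whether the local prior $\pi_0$ is retained or absorbed, since it is constant across the neighbor evidence but does distinguish the wild hypothesis from the colored hypotheses once the posterior is normalized across all values of $\chi(v)$.
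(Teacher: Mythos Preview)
Your proposal is correct and follows essentially the same route as the paper: the first assertion is treated as a restatement of what it means for $v$ to be wild, and the second is obtained by combining Bayes' rule with the conditional independence approximation and the first assertion, with the distributive-law factoring of $\sum_{(i_w)}\prod_w \pi_{i_w} p^w_{i_w}$ into $\prod_w \sum_i \pi_i p^w_i$ being exactly the step the paper describes as ``when the product of sums is multiplied out, each assignment of colors to the elements of $\Gamma(v)$ receives a weight given by \eqref{e:priorprod}.'' Your additional remarks on conditioning on a law versus a realized color and on the role of the $\pi_0$ factor are sound but go beyond what the paper's own proof spells out.
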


\begin{proof}
The first assertion restates the definition of a wild color vertex, namely one that gives no information
about the color of its neighbors. The second assertion follows from Bayes rule, from
the conditional independence approximation, and from the first assertion, because when the product of sums
is multiplied out, each assignment of colors to the elements of $\Gamma(v)\}$ receives
 a weight given by (\ref{e:priorprod}).
\end{proof}

\subsubsection{Including Misattribution in the Bayesian Formulation}
Our final improvement to the Bayes probability distribution is to incorporate a small probability that an uninformative edge exists between otherwise correctly labeled nodes. This is intended to reflect the possibility of misattribution in the graph. Assume that the misattribution rate is $\alpha$. 
Consider any edge $e:=\{v, w\}$, and let $E_* \subset E$ denote the set of uninformative edges leading to misattribution. Fix any color $i$.
In the following inclusion-exclusion calculation, two terms are omitted because $\{\chi(w) = i\} \cap \{\chi(w) = \mho \} = \emptyset$.
\begin{equation*}
\begin{split}
& P(\{\chi(w) = i\} \cup \{\chi(w) = \mho \} \cup \{e \in E_*\})\\
& = P(\chi(w) = i) + P(\chi(w) = \mho) + P(e \in E_*)\\
& - P(\{\chi(w) = i\} \cap \{e \in E_*\}) - P\{\chi(w) = \mho \} \cap \{e \in E_*\}).
\end{split}
\end{equation*}
Abbreviate the conditional probability $P(\cdot \mid \chi(v) = i)$ to $P_i(\cdot)$. 
When we condition on the event $\{\chi(v) = i\} $, the conditional independence approximation may be combined with
the last identity to give
\begin{equation*}
\begin{split}
& P_i(\{\chi(w) = i\} \cup \{\chi(w) = \mho \} \cup \{e \in E_*\})\\
& = P_i(\chi(w) = i) + P_i(\chi(w) = \mho) + \\
& P_i(e \in E_*) (1 -  P_i(\chi(w) = i) - P_i(\chi(w) = \mho)).
\end{split}
\end{equation*}
The full generalization of the update term in Bayes' formula (\ref{e:nbfupdate}) includes the possibility that some $w \in \Gamma(v)$ may be wild,
and some edges $\{v,w\}$ may be misattributed. Continuing to write $P(\cdot \mid \chi(v) = i)$ as $P_i(\cdot)$, the joint
law of $\{\chi(w), w \in \Gamma(v)\}$ is expressed in a generalized form of the right side of (\ref{e:cia}) as:
\begin{equation} \label{e:genform}
\begin{split}
& \prod_{w \in \Gamma(v)} \left[
q_{i, w} + (1 - q_{i, w}) P_i(\{v,w\} \in E_*) 
\right];
\\
& q_{i, w}:=P_i(\chi(w) = i) + P_i(\chi(w) = \mho).
\end{split}
\end{equation}

\subsection{Parameters}
Our algorithm requires an estimate for the misattribution rate $\alpha$. It would be unrealistic to use the same $\alpha$ that we used in constructing the graph, since that will be unknown in a general graph problem. Thus, we need to create a model for $\alpha$. Overestimating the true $\alpha$ will make our probability updates less confident than they should be. Underestimating the true $\alpha$ will make our probability updates too confident. Since we would rather be in the former case, we can model $\alpha$ by assuming that all the noise in the graph (SNR) comes from misattribution. In that case, we have $\text{SNR} \approx (1-\alpha)^2$, so our over-estimate for the true $\alpha$ will be $1-\sqrt{\text{SNR}}$. If we are given values for other types of error, we could derive a more accurate $\alpha$ from the SNR.

We acknowledge that it is possible to create a model where the $\alpha$ are edge-specific, but it would be difficult to balance parameters such that we could distinguish wild nodes from nodes with all neighbors misattributed. We prefer to focus on message-passing algorithms in which the nodes are impartial to the sources of their messages.

Aside from $\alpha$, we must define initial priors for the labels on the nodes. On the left nodes, we set these priors uniform. On the right nodes, we have three values to consider: the prior on the given label, the prior on wildness, and the prior on not matching the label. We decide to set these approximately equal, with the prior on wildness slightly discounted and the prior on not maching the label distributed uniformly accross the other labels. This prior might be further improved by weighting each color by its frequency in the graph at large.

\subsection{Summary}
The Naive Bayesian algorithm creates iterative updates of our belief about the left nodes given the right nodes, and vice versa. 
The update scheme follows naturally from the interaction of different sorts of errors, from a local conditional independence assumption, and from applying Bayes' Rule. 
Out of all our algorithms, the Naive Bayesian algorithm models the problem most directly: 
we assign probabilities to certain existing structures and consider the natural probabalistic consequences according to Bayes' Rule.

\section{Harmonic Function Formulation}

\subsection{Background}

\subsubsection{Classical Background}
In classical analysis, the \textbf{Dirichlet problem} seeks to extend
a real-valued continuous function $f$, defined on the boundary $\partial B$ of a bounded closed subset 
$B$ of $\mathbf{R}^d$, 
to a harmonic $f$ on the interior of $B$. Harmonic means that $\Delta f$ = 0, where $\Delta$ is the Laplace operator.
 The probabilistic solution
consists in starting a Brownian motion $(X_t)_{t \geq 0}$ at an arbitrary $x \in B$, and stopping it at the first exit time $\tau$ from $B$.
The expected value of $f(X_{\tau})$ is well defined because $X_{\tau} \in \partial B$; it 
agrees with $f(x)$ when $x \in \partial B$, and is a harmonic function of the starting point $x$, so it solves the Dirichlet problem.

\subsubsection{Random Walk on Graphs}
The construction extends from the case of Brownian motion on $\mathbf{R}^d$ to that of random walks on graphs, as 
described by Lovasz \cite{lov}; here the role of the Laplace second order differential operator is replaced by the Laplacian matrix $L = D - A$,
where rows and columns are indexed by graph vertices, $D$ is the diagonal matrix of vertex degrees, and
$A$ is the graph adjacency matrix.

More generally, suppose $G = (V, E, (w_{i,j})_{\{i,j\} \in E \}})$ is an edge weighted graph, with all edge weights $w_{i,j} > 0$.
Redefine $D$ and $A$ so that
\[
D_{i,i}:= \sum_{j \sim i} w_{i,j} > 0; \quad A_{i,j} = w_{i,j} 1_{\{i \sim j \}}.
\]
This corresponds to a random walk $(X_t)_{t=0, 1, \ldots}$ on the vertex set $V$, where transition probabilities 
from vertex $v$ to its neighboring vertices
are determined by the normalized weights of edges incident to $v$. 

A real-valued vector $f$ indexed by the vertices of the graph is called \textbf{harmonic} if $Lf = 0$ at every vertex,
 which is the same as saying that the value of $f$ at vertex $v$ is the edge-weighted average of its value at adjacent vertices.
For $n \geq 2$, define $f:V \to \mathbf{R}^n$ to be harmonic if every component is harmonic.

\subsubsection{Constructing Harmonic Functions on Graphs}
Suppose the vertex set $V$ of a finite connected weighted graph is partitioned into $V_0 \cup V_1$.
Consider $V_0 \neq \emptyset$ as a boundary, on which
 a function  $g:V_0 \to \mathbf{R}^n$ is defined. We imitate in (\ref{e:hfgraph}) below the
 solution of the Dirichlet problem to construct a harmonic function $f:V \to \mathbf{R}^n$
which agrees with $g$ on $V_0$.

\begin{lemma} \label{l:energy}
Let $G = (V, E,(w_{i,j})_{\{i,j\} \in E})$ be
a finite connected weighted graph. Partition the vertex set $V$ as $V_0 \cup V_1$. 
Given a function  $g:V_0 \to \mathbf{R}^n$ whose components $(g_1(u), \ldots, g_n(u))$ are non-negative, and sum to 1 at each $u \in V_0$,
there is a unique function $f:V \to \mathbf{R}^n$, all of whose components are harmonic, such that $f_{|V_0} = g$. Furthermore:
\begin{enumerate}
\item
The components $(f_1(u), \ldots, f_n(u))$ are non-negative and sum to 1 at every $u \in V$.
\item
The energy $\mathcal{E}(f)$ is minimum  among all functions which agree with $g$ on $V_0$, where
\begin{equation} \label{e:energy}
\mathcal{E}(f):= \frac{1}{2} \sum_{i \in V} \sum_{j \sim i} w_{i,j} \|f(i) - f(j)\|^2.
\end{equation}
\end{enumerate}
\end{lemma}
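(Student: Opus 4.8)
The plan is to treat this as the discrete Dirichlet problem, reading ``harmonic'' in the natural Dirichlet-problem sense: harmonicity is imposed at the interior vertices $V_1$, so that $(Lf)(i)=0$ for every $i\in V_1$ (equivalently, $f(i)$ is the $w$-weighted average of its neighbours), while $f$ is pinned to $g$ on the boundary $V_0$. My preferred route dispatches existence, uniqueness, and the energy-minimization claim (part 2) in a single stroke via Dirichlet's principle, and then reads off part 1 from either a maximum principle or a random-walk representation.

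First I would view the energy $\mathcal{E}$ in (\ref{e:energy}) as a quadratic functional on the affine space $\mathcal{A}:=\{\tilde f:V\to\mathbf{R}^n,\ \tilde f|_{V_0}=g\}$, parametrised by the free interior values. The key computation is a discrete Green identity: for any $f$ and any perturbation $h$ that vanishes on $V_0$, expanding $\|(f+h)(i)-(f+h)(j)\|^2$ and summing by parts (using symmetry of the weights $w_{i,j}$) gives $\mathcal{E}(f+h)=\mathcal{E}(f)+2\sum_{i\in V_1} h(i)\cdot(Lf)(i)+\mathcal{E}(h)$. If $f$ is harmonic on $V_1$ the cross term vanishes, leaving $\mathcal{E}(f+h)=\mathcal{E}(f)+\mathcal{E}(h)\ge\mathcal{E}(f)$, which is exactly part 2; conversely, demanding that the first-order term $\sum_{i\in V_1}h(i)\cdot(Lf)(i)$ vanish for all admissible $h$ forces $(Lf)(i)=0$ on $V_1$, so the energy minimiser and the harmonic extension coincide.

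The crux — and what I expect to be the main obstacle — is the nondegeneracy that guarantees the minimiser exists and is unique. Concretely I would show that $\mathcal{E}$ is \emph{positive definite} on the subspace of functions vanishing on $V_0$: since $\mathcal{E}(h)=0$ forces $h(i)=h(j)$ across every edge, connectedness of $G$ makes $h$ constant, and $h|_{V_0}=0$ with $V_0\neq\emptyset$ then forces $h\equiv 0$. This is precisely where both hypotheses, ``$G$ connected'' and ``$V_0\neq\emptyset$'', are used. Positive-definiteness makes $\mathcal{E}$ a strictly convex, coercive quadratic on $\mathcal{A}$, so it has a unique minimiser, which by the previous paragraph is the unique harmonic extension. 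Equivalently, one could argue that the interior block $L_{11}$ of the Laplacian is symmetric positive definite by the same connectivity argument and solve $f_{|V_1}=-L_{11}^{-1}L_{10}\,g$ directly.

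Finally, for part 1 I would invoke the probabilistic representation $f(v)=\mathbf{E}_v[g(X_\tau)]$, where $(X_t)$ is the random walk with transition probabilities $w_{v,j}/D_{v,v}$ and $\tau$ is the first hitting time of $V_0$; finiteness and connectedness guarantee $\tau<\infty$ almost surely, first-step analysis shows this $f$ is harmonic on $V_1$ with $f|_{V_0}=g$, and uniqueness identifies it with the minimiser. The component non-negativity and the sum-to-one property then follow instantly, since $g(X_\tau)$ is almost surely a probability vector and expectation preserves both. If one prefers to avoid probability, the same two facts follow from the discrete minimum principle (each harmonic component attains its minimum on $V_0$, where $g_k\ge 0$, forcing $f_k\ge 0$) together with the observation that $\sum_k f_k$ is harmonic on $V_1$ and equals $1$ on $V_0$, hence equals the constant function $1$ by uniqueness.
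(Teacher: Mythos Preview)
Your proposal is correct and uses the same three ingredients as the paper's proof --- the random-walk representation $f(u)=\mathbf{E}_u[g(X_\tau)]$, the perturbation identity $\mathcal{E}(f+h)=\mathcal{E}(f)+2\sum_i h(i)\cdot (Lf)(i)+\mathcal{E}(h)$, and connectedness for uniqueness --- but in the reverse order. The paper begins with the random walk to \emph{construct} $f$, reads off part~1 immediately, checks harmonicity by first-step conditioning, defers uniqueness to an unstated ``linear algebra'' remark, and only then verifies the energy claim via the same perturbation computation you give (leaving the second-order term to a one-line ``second derivative analysis''). You instead run Dirichlet's principle first: the perturbation identity plus your positive-definiteness argument on $\{h:h|_{V_0}=0\}$ handles existence, uniqueness, and part~2 simultaneously, and you bring in the random walk (or the maximum principle) only afterwards for part~1. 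Your route is slightly tidier --- you make explicit where connectedness and $V_0\neq\emptyset$ are used and you write out the $\mathcal{E}(h)\ge 0$ term rather than gesturing at a second-derivative check --- but the two arguments are really the same proof read in opposite directions.
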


\begin{proof}
Since $G$ is finite and connected, random walk on $V$ is positive recurrent. Consider the random walk $(X_t)_{t=0, 1, \ldots}$,
stopped at the first step $\tau$ when it reaches a vertex in $V_0$. By positive recurrence, $\tau$ is finite with probability 1, and
$\tau = 0$ when the random walk starts in $V_0$. Hence $g(X_{\tau})$ is a well defined random variable.
For any vertex $u \in V$, define 
\begin{equation} \label{e:hfgraph}
f(u):=\mathbf{E}[g(X_{\tau}) \mid X_0 = u].
\end{equation}
This definition transfers from $g$ to $f$ the 
non-negativity of each component, and the fact that the components sum to 1.
By conditioning on the value of $X_1$, 
and invoking the Markov property, we see that for every component $f_j$
\[
f_j(u) = \sum_{v \sim u} \frac{w_{u,v}}{D_{u,u}} \mathbf{E}[g_j(X_{\tau}) \mid X_1 = v]
=  \sum_{v \sim u} \frac{A_{u,v}}{D_{u,u}} f_j(v).
\]
Therefore $D f_j - A f_j  = 0$ for every component $j$, and $f_j$ agrees with $g_j$ on $V_0$. So $f$ is harmonic.
Uniqueness of the solution to $(D - A)f = 0$, $f_{|V_0} = g$, follows from linear algebra.

It remains to check that $f$ is energy-minimizing.
Consider a perturbation $f + h$ for $h:V \to \mathbf{R}$ with $h_{|V_0} = 0$. A routine computation shows that
\[
\mathcal{E}(f+h) - \mathcal{E}(f) = 
2 \sum_i h(i) (D_{i,i} f(i) - \sum_{j \sim i} A_{i,j} f(j)) + O(\|h\|^2).
\]
Therefore the derivative $\nabla f (h)$ is given by
\[
\nabla f (h) = 2 \sum_i h(i) Lf(i).
\]
Since $Lf(i) = 0$ for $i \in V_1$, the right side must be zero for all perturbations $f + h$ with $h_{|V_0} = 0$.
Hence $f$ has minimum or maximum energy. A second derivative analysis shows the energy is minimum.
\end{proof}

\subsection{Objective Function}

\subsubsection{Symmetrization}
As Markov processes, Brownian motion on $\mathbf{R}^d$ and harmonic function on a weighted graph differ in that the transition
function for the former is symmetric (the rate of transition from $x$ to $y$ in time $t$  is the same as from $y$ to $x$), 
whereas for the latter it is typically asymmetric. For example,
if adjacent vertices $v, v'$ have degrees $d, d'$ with $d < d'$, then the unweighted transition from $v$ to $v'$ in one step occurs with
probability $1/d$, while the transition from $v'$ to $v$ in one step occurs with probability $1/d' < 1/d$. Some authors
replace the Laplacian matrix $L = D - A$ by the symmetric normalized Laplacian matrix $L^s:= I - D^{-1/2} A D^{-1/2}$.
The meaning of the term harmonic function changes accordingly. In our application we do not symmetrize the Laplacian.
For other examples of the use of harmonic functions for label propagation, see Fouss et al \cite{fou}.

\subsubsection{Harmonic Functions For Label Propagation}
If we have $d$ colors, we are concerned with functions from the vertices of the auxiliary graph $G'$ 
(see \ref{graph_expression}) to $\mathbf{R}^d$.
The auxiliary nodes will play the role of the boundary $V_0$ in Lemma \ref{l:energy}.
The value of the function at right node $v_k$ will be expressed by a $d$-dimensional vector of probabilities
$r^k = (r_1^k,\ldots,r_d^k)$, where $r_i^k$ is the probability that $v_k$ has color $i$. 
Naturally, $\sum_i{r_i^k} = 1$. We define $\ell^j$ similarly for $u_j \in L$. 
We encode the initial coloring of $v_k$ by tagging the auxiliary node $w_k \sim v_k$ with a vector $h^k$ which equals basis vector $e_i$
in the case where color $i$ is initially assigned to right vertex $v_k$.
The energy (\ref{e:energy}) takes the form
\begin{equation} \label{e:energybpg}
 \sum_{\ell^j \sim r^k}{||\ell^j-r^k||^2} + \theta \sum_{v^k \in R}{||r^k-h^k||^2}.
\end{equation}
To minimize this energy means to find the best compromise between (a) agreement in color vectors of adjacent left and right nodes, and
(b) agreement between color vectors of right nodes and their initial colorings. The penalty parameter $\theta > 0$ 
defines the relative weight of (b) versus (a).

Lemma \ref{l:energy} asserts that a function on $L \cup R$ which minimizes this energy, given the boundary
conditions on the auxiliary nodes, is harmonic on $L \cup R$ in each component, with respect to a weighting on the edges
determined by $\theta$. This weighting is most easily expressed in terms of the followng transition probabilities for the random walk.

\subsection{Algorithm}

\subsubsection{Random Walk Transition Probabilities}
We consider a transient random walk on the auxiliary graph $G'$ (see \ref{graph_expression}) defined as follows:
\begin{itemize}
	\item The auxiliary nodes are the absorbing states.
	\item When we are at a right node, we move to its adjacent auxiliary node with probability 
$p = \theta/(1 + \theta)$. With probability $1-p$, we move to a left neighbor picked uniformly at random.
	\item When we are at a left node, we move to a right neighbor picked uniformly at random.
\end{itemize}

For node $v_i$, let the probabilities of absorption at an auxuiliary node with color $j$ for a walk starting at $v_i$ be $\varphi_i(j)$. 
Then $v_i \mapsto (\varphi_i(1), \ldots, \varphi_i(d))$ is the unique harmonic function described in Lemma \ref{l:energy},
and  minimizes the energy (\ref{e:energybpg}).
The $j$ for which $\varphi_i(j)$ is maximum will be called a \textbf{maximum likelihood color}.

\subsubsection{Message-Passing Algorithm for Harmonic Function}
A harmonic function on a graph may be computed exactly by linear algebra techniques,
whose work scales according to $|V| \cdot |E|$. 
Instead we opt for a message-passing approximation which scales according to $|E|$ and can be easily parallelized.
We state the harmonic function algorithm in terms of quantities $\varphi^{(n)}_i(j)$, for $n = 0, 1, 2, \dots$, defined as follows,
and interpreted in Lemma \ref{rwconverges}. 

Initialize $\varphi^{(0)}_i(j) = 0$ if $v_i$ is a right or left node.
At odd numbered steps $n = 1, 3, 5, \ldots$,
the $ r^k:=\varphi^{(n)}_k(j)$ will be updated for right nodes $v_k$. 
At even numbered steps $n = 2, 4, 6, \ldots$, $\ell^i=\varphi^{(n)}_i(j)$ will be updated for left nodes $v_i$.

Update node $v_i$ at step $n \geq 1$ so that $\varphi^{(n)}_i(j)$ is the weighted average of the $\varphi^{(n-1)}_{i'}(j)$ 
for neighbors $i' \in \Gamma(i)$. For a left node, all right neighbors are weighted equally. For a right node, 
its auxiliary neighbor has weight $p$ and each left neighbor has weight $1-p$ divided by the number of left neighbors.

\begin{lemma} \label{rwconverges}
$\varphi^{(n)}_i(j)$ is the probability that a random walk starting at $v_i$ will be absorbed 
at an auxiliary vertex with color $j$ after at most $k$ steps, and
converges from below to $\varphi_i(j)$ as $n \to \infty$. This convergence
occurs exponentially fast, in the sense that
\[
\sum_{j=1}^d (\varphi_i(j) - \varphi^{(2n)}_i(j) ) \leq (1-p)^n.
\]
\end{lemma}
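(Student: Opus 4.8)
\section*{Proof proposal}

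The plan is to give $\varphi^{(n)}_i(j)$ an exact probabilistic meaning and then read both assertions off that meaning, using a one-step (first-step) analysis for the recursion and a geometric tail bound for the rate. Let $\tau$ denote the first time the random walk on $G'$ reaches an auxiliary (absorbing) vertex, as in Lemma \ref{l:energy}, and for a walk started at $v_i$ let $A_j$ be the event that the vertex of absorption has color $j$; by Lemma \ref{l:energy} the limiting harmonic function satisfies $\varphi_i(j) = \mathbf{P}(A_j \mid X_0 = v_i)$. Define $\psi^{(n)}_i(j) := \mathbf{P}(A_j,\ \tau \le n \mid X_0 = v_i)$, the probability of being absorbed at a color-$j$ auxiliary vertex within $n$ steps. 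I would first prove $\varphi^{(n)}_i(j) = \psi^{(n)}_i(j)$ for every $n$, which immediately identifies $\varphi^{(n)}_i(j)$ as the claimed hitting probability.

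The identity is proved by induction on $n$. In the base case $\psi^{(0)}_i(j)=0$, since a walk started at a (non-absorbing) left or right vertex has taken no steps, matching the initialization $\varphi^{(0)}_i(j)=0$. For the inductive step I condition on the first move $X_1$, which is exactly the computation the algorithm performs. At a right node $v_k$ the walk is absorbed in one step at its auxiliary neighbor (whose tagged vector is $h^k$) with probability $p$, and otherwise steps to a uniformly chosen left neighbor, so
\[
\psi^{(n)}_k(j) = p\, h^k_j + (1-p)\frac{1}{d_k}\sum_{u_i \sim v_k}\psi^{(n-1)}_i(j),
\]
which is precisely the stated right-node update (auxiliary weight $p$ carrying the value $h^k_j$). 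At a left node the walk steps to a uniformly chosen right neighbor, giving
\[
\psi^{(n)}_i(j) = \frac{1}{d_i}\sum_{v_k \sim u_i}\psi^{(n-1)}_k(j),
\]
the stated left-node update. The alternation (right nodes at odd $n$, left nodes at even $n$) is consistent with the walk's parity: started at a right node, absorption can only occur at odd times, and started at a left node only at even times, so the values carried unchanged across a half-step are in fact correct. Finally, $\psi^{(n)}_i(j) = \mathbf{P}(A_j,\ \tau \le n)$ is nondecreasing in $n$ and bounded above by $\mathbf{P}(A_j) = \varphi_i(j)$, so it converges upward to $\varphi_i(j)$; this gives the asserted convergence from below.

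For the exponential rate I sum the identity over colors. Since $\sum_{j} \varphi^{(2n)}_i(j) = \mathbf{P}(\tau \le 2n \mid X_0 = v_i)$ and $\sum_j \varphi_i(j) = \mathbf{P}(\tau < \infty \mid X_0 = v_i)$,
\[
\sum_{j=1}^d \bigl(\varphi_i(j) - \varphi^{(2n)}_i(j)\bigr) = \mathbf{P}(\tau < \infty \mid X_0=v_i) - \mathbf{P}(\tau \le 2n \mid X_0 = v_i),
\]
so it suffices to bound the tail $\mathbf{P}(\tau > 2n)$. The key observation is that each visit of the walk to a right node is followed one step later by an absorption event of probability $p$, independent across visits, while between absorption events the walk alternates deterministically in type (right $\to$ left $\to$ right). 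Hence in the first $2n$ steps the walk meets exactly $n$ independent absorption opportunities (at right-node visits among times $0,\dots,2n-2$ when started right, or $1,\dots,2n-1$ when started left), and $\{\tau > 2n\}$ occurs iff all $n$ of them fail, giving $\mathbf{P}(\tau > 2n \mid X_0 = v_i) = (1-p)^n$. Letting $n \to \infty$ this also yields $\mathbf{P}(\tau < \infty) = 1$ and hence $\sum_j \varphi_i(j) = 1$, so the displayed difference equals $\mathbf{P}(\tau > 2n) = (1-p)^n$, establishing the bound.

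I expect the main obstacle to be bookkeeping rather than depth: one must align the Gauss--Seidel-style alternating update order with the parity of the walk so that the first-step recursion reproduces the algorithm verbatim, and one must establish certainty of absorption $\mathbf{P}(\tau<\infty)=1$ without circularity, which is cleanest to extract from the same geometric tail bound that produces the exponential rate. A minor care point is a right node with no left neighbors, which is absorbed with probability one; this only accelerates absorption and preserves the inequality, so I would note it and otherwise assume every right node has positive degree, making the uniform choice over left neighbors well defined.
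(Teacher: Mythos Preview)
Your proposal is correct and follows essentially the same route as the paper: identify $\varphi^{(n)}_i(j)$ with the truncated absorption probability by a first-step (Markov property) induction, then obtain the rate by summing over colors and bounding the geometric tail $\mathbf{P}(\tau>2n)=(1-p)^n$. Your version is slightly more careful in two places---you make the parity alignment of the alternating update with the parity of $\tau$ explicit, and you derive $\mathbf{P}(\tau<\infty)=1$ directly from the geometric tail rather than invoking positive recurrence---but these are presentational refinements, not a different argument.
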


\begin{proof}
The assertion about $\varphi^{(n)}_i(j)$  is true when $n = 0$, because the right and left vertices are not absorbing.
It holds for all $n$ follows by induction using the Markov property and the transition probabilities for the random walk.

Let $\tau$ denote the first $n$ for which the random walk $X_n$, started
at a left vertex $v_i$, is absorbed at an auxiliary vertex. Then $\tau \in \{2, 4, 6, \ldots\}$,
and $P(\tau = 2 n) = p (1 - p)^n$ by the renewal property of Markov chains. Let $F_j$ denote the event
that final absorption of the random walk occurs in an auxiliary vertex of color $j$. Then
\[
\sum_{j=1}^d \varphi_i(j) = P(\bigcup_j F_j \cap \{\tau \leq 2 n\} \mid X_0 = v_i) +
P(\tau \geq 2 n + 2 \mid X_0 = v_i).
\]
The first summand on the right is $\sum_{j=1}^d \varphi^{(2n)}_i(j)$, and the second is $P(\tau/2 \geq n + 1) = (1 - p)^{n+1}$.
This holds for every $j = 1, 2, \ldots , d$, which implies the inequality in the case where $v_i$ is a left vertex.
The case of a right vertex is similar, except $\tau$ takes odd integer values and $P((\tau-1)/2 \geq n) = (1 - p)^{n}$.
\end{proof}


\subsection{Wildness/Parameters}

\subsubsection{Wildness via Jensen-Shannon Divergence} \label{JSDwildness}
Once we have a reasonable approximation to $\varphi_i(j)$ for a given $i$ for all $j$, we would like to know whether to give the node $i$ its maximum liklihood color or classify it as wild. For this, we introduce the Jensen-Shannon Divergence (JSD), a symmetrized version of the Kullback-Leibler (KL) divergence which is a distance metric for probability distributions. For two probability distributions $P$ and $Q$, we define 
\[
\text{JSD}(P||Q) = \frac{1}{2} D(P||M) + \frac{1}{2} D(Q||M),
\]
where $D$ is the Kullback-Leibler divergence and $M = \frac{1}{2} (P + Q)$. 

When the base of the logarithm in the KL divergence is $2$, the Jensen-Shannon Divergence gives a value between $0$ and $1$. If we let $P_i$ be the 
distribution $(\varphi_i(j))_{1 \leq j \leq d}$ over colors and $Q$ be the probability distribution over colors of labels in the graph, then $\text{JSD}(P_i||Q)$ is close to 1 when node $i$ is normal, and close to zero when it is wild.
For if $P_i = Q$, then the probability that node $i$ is color $j$ equals the probability that a random node is color $j$: $\text{JSD}(P_i||Q)=0$ and we believe $i$ is wild. On the other hand, if $\varphi_i(j) \approx 1$ and $j$ is a rare color in the graph:$\text{JSD}(P_i||Q)\approx 1$ and we are confident that $i$ is normal. We set a parameter $\tau$. If the JSD exceeds $\tau$, we give the node its maximum liklihood label. Otherwise, we give it a wild label.

\subsubsection{Metaparameter Settings}
The value of $p$ affects the expected length of the random walk independent of graph structure. The number of visits to right nodes before absorption has a Geometric distribution, the same as the number of Bernoulli$(p)$ trials needed for the first success. Hence its expected value is $\frac{1}{p}$. The probability that the random walk makes at least $n+1$ visits to right nodes ($2n$ steps) is $(1-p)^n$. 
The larger $p$, the more strength we give to our prior. The smaller $p$, the more diffusion we allow. Experimentally, $p = \frac{1}{12}$ appeared to be a good setting. This was approximately the reciprocal of the mean vertex degree.

Finally, our setting for $\tau$ is entirely experimental. If $\tau$ is small then we are likely to classify some wilds as a color. If $\tau$ is too large, then we are likely to assign too many wild labels.

\subsection{Similarity to Semidefinite Programming Formulation}
Compare the objective function (\ref{eq:sdpvoting}) for the SDP formulation with the energy function (\ref{e:energybpg})
which our harmonic function minimizes. The same cross product terms $\ell^j \cdot r^k$ and $h^k \cdot r^k$ appear in both.
Also the vectors $\ell^j$ and $r^k$ are constrained to be probability vectors in both cases. 
The difference is that terms of the form $\ell^j \cdot \ell^j$ and $r^k \cdot r^k$ appear in (\ref{e:energybpg}),
but not in (\ref{eq:sdpvoting}). Thus the harmonic function also seeks probability vectors with smaller sums of squares,
meaning that they are closer to uniform.

From what we have seen so far, this makes sense: the SDP solution above could be made to have mostly integer entries by a linearity argument. The same is not true of the harmonic function. 

\subsection{Summary}
Random walks on graphs are extremely well studied, as they form the basis of many practical graph algorithms for large-scale clustering, detection, and labeling. In our problem instance, our key observation is understanding how to describe the harmonic function in terms of an appropriate auxiliary graph. 
On a bipartite graph, we demonstrate exponentially fast convergence, without resorting to spectral methods, from a message-passing algorithm
which scales better than linear algebra methods.

\section{Bayesian Multinomial Formulation}

\subsection{Background}
In the Naive Bayes formulation, each vertex was assumed to carry a single unknown color, about which our beliefs were updated.
Motivated by the literature in Monte Carlo methods, Expectation-Maximization, and Variational Bayesian methods \cite{jordan},
we now investigate probabilistic models where each vertex carries a multinomial distribution across the $d$ colors, and
message-passing updates the parameters of these multinomial distributions.
We attempt to combine ideas used in the Naive Bayesian and Harmonic Function models. 

\subsection{Algorithm}
\subsubsection{Multinomial Distribution}

Recall the use of the $\text{Dirichlet}(\alpha_1, \alpha_2, \ldots, \alpha_d)$ distribution as a conjugate prior for the 
parameters of the $\text{multinomial}(p_1, p_2, \ldots, p_d)$. At the point $(p_1, p_2, \ldots, p_d)$ in a $(d-1)$-dimensional
simplex, this Dirichlet density takes value
\[
\frac{1}{B(\alpha)} \prod_{i=1}^d p_i^{\alpha_i - 1},
\]
where $B(\alpha)$ is the multivariate Beta function. In cases where some, but not all, of the $\alpha_i$ are zero,
we obtain in effect a density over a lower dimensional simplex.
Suppose we have assigned this prior and a multinomial sample of size $n$
yields a vector $(n_1, n_2, \ldots, n_d)$ of outcomes for each of the $d$ categories, with $\sum n_i = n$. The posterior
distribution for $(p_1, p_2, \ldots, p_d)$ is also Dirichlet, with parameters $(\alpha_1+n_1, \alpha_2 + n_2, \ldots, \alpha_d + n_d)$.

\subsubsection{Randomized Message-Passing}
Model the color state at left or right node $v_i$ as $\text{multinomial}(p_1^i, p_2^i, \ldots, p_d^i)$. The unknown parameter vector
is assumed to have a $\text{Dirichlet}(\alpha_1^i, \alpha_2^i, \ldots, \alpha_d^i)$ distribution.
At the outset, these Dirichlet parameters are all zero at a left vertex; at a right vertex $\alpha_j^i = \mu_i$ if $v_i$  is initially
colored $j$, and zero otherwise. Here $\mu_i > 0$ is a parameter, which was a quarter of the degree of $v_i$
in our experiments. Zero values of $\alpha_k^i$ will not cause a problem, because
after step $1$ the sum of the parameters at every left vertex will be positive. 

At iterations $n = 1, 3, 5, \dots$ of the message passing algorithm, all the right nodes pass randomized messages to their left neighbors;
at iterations $n = 2, 4, 6, \dots$, all the left nodes pass randomized messages to their right neighbors, and a right node also receives
a deterministic reinforcement vector $ \lambda_i e_j$ if the initial coloring is $j$. Here $e_j$ denotes the $j$-th basis vector in $\mathbf{R}^d$,
and $\lambda_i > 0$ is a parameter, which was an eighth of the degree of $v_i$ in our experiments.

To construct a randomized message to each of its neighbors, node $v_i$ performs one $\text{multinomial}(p_1^i, p_2^i, \ldots, p_d^i)$ trial, 
and if the outcome is of category (color) $j$, the unit vector $e_j$ is sent to every neighbor. The probability that node $v_i$
sends unit vector $e_j$ is 
\[
\frac{\alpha_j^i}{\sum_k \alpha_k^i}
\]
When node $v_k$ receives the unit vector $e_j$, its updated Dirichlet distribution has parameters
$(\alpha_1^k, \alpha_2^k, \ldots, \alpha_d^k) + e_j$. In other words, the $j$-th component increases by $1$.

\subsubsection{Convergence}
The convergence condition for the Bayesian Multinomial is analogous to the definition of a harmonic function: 
it should be the case at every node $v_i$ that if a neighbour $v_{i'} \sim v_i$ is sampled uniformly at random,
and if that neighbor generates a random message, then the probability that the message is $e_j$ coincides (to some
precision) with the probability that $v_i$ itself generates message $e_j$. 
In other words, the algorithm has a fixed point where
\[
\frac{\alpha_j^i}{\sum_k \alpha_k^i} \approx \frac{1}{|\Gamma(v_i)|} \sum_{v_{i'} \sim v_i} \frac{\alpha_j^{i'}}{\sum_k \alpha_k^{i'}},
\quad j \in \{1, 2, \ldots, d\}.
\]
Further work is needed, possibly using martingale analysis of a multiple Polya urn model, in order to prove that such a fixed point
exists for given parameter choices $(\mu_i), (\lambda_i)$.

\subsection{Wildness}
A key advantage of the randomized multinomial message passing algorithm is its ability to incorporate an explicit model for wildness. 
It can be modified so that, at each step, instead of sending a message with a color a node can instead send a message that says it is wild. This happens with higher probability the closer that the Jensen-Shannon Divergence is to zero (see \ref{JSDwildness}).

\subsection{Summary}
Inspired by the Naive Bayesian algorithm, we developed a Bayesian algorithm which more closely resembles expectation maximization: node coloring is assumed to be randomized multinomial, and our characterization of the multinomial parameters by a Dirichlet distribution is updated at each iteration. The termination condition of this algorithm is analogous to that of the harmonic function construction. Its downside is that, due to random sampling, it reaches its conclusion slower than the harmonic function. However, the description in terms of a randomized multinomial gives a more sophisticated technique for broadcasting wildness during the algorithm, instead of determining it retroactively (as in the harmonic function).

\section{Machine Learning and Tensorflow}

\subsection{Background}
It is possible to view this problem as a purely machine-learning problem. For each of the $|R|$ right nodes, we can create an $|L|$-dimensional binary feature vector. For a given right node, each coordinate in the $|L|$-dimensional vector is $1$ if the right node is adjacent to that left node and $0$ otherwise. Our goal, in machine learning terms, is to detect clusters (colors) and outliers (wilds) simultaneously.

Typical clustering algorithms face some challenges in this context. First, the labels may not necessarily define clusters but could define a union of sub-clusters. This means that the true number of graph clusters could be significantly larger than the number of labels if the labels represent the union of smaller categories. Second, we are in an extremely large dimension with extreme sparsity where the notion of proximity for classification is less meaningful \cite{agg}. This essentially rules out algorithms such as $k$-means, and casts doubt on SVMs or random forests.

For the reasons mentioned above, we opt to use neural networks. We are further motivated by the fact that the left nodes intuitively seem to behave somewhat like neurons. Being adjacent to a particular left node is ultimately evidence for or against a particular color in a potentially non-linear way. For example, being adjacent to a particular pair of different colored nodes may be strong evidence of wildness, or relatively weak evidence of wildness depending on the pair (if they are ``distant,'' it is strong evidence, while if they are ``nearby'' it is weak evidence).

Though it is a somewhat atypical application, neural networks for sparse classification problems have previous been considered in network problems \cite{tan} and in recommender systems \cite{goo2}.

\subsection{Transductive Learning}
The machine learning context applicable here is called ``transductive learning.'' In general, ``transductive learning'' is learning where the training and testing sets are subsets of the same data set. Learning uses direct deductions rather than surmising general principles. In this specific case, our training and testing set are the same, but the training set contains some errors. We would like to predict the correct testing labels given the sometimes incorrect training labels. Arguably, it is a semi-supervised learning problem. We think of the set of left node adjacencies as features on the right nodes and right node colors as the labels to be predicted from those features. Our goal is to predict the color of a right node given the left nodes that it neighbors.

\subsection{Objective Function}
We will create a simple neural network with a single intermediate layer. Our layers are the input layer, the fully connected intermediate layer, and the output layer. The input layer will always have size $|L|$ and the output layer will always have size $|C|$. Without any additional a priori assumptions about structure, it is typical to set the intermediate layer to half the size of the input layer. To give ourselves extra flexibility, we will consider an intermediate layer with $(3|L| + 2|C|)/5$ neurons.

In optimization terms, we are finding the optimal weights for a large objective function. 
Let $A_j$ be an $|L|\times 1$ input vector in the training set, which is a column of the adjacency matrix. Let $W_1$ be the $|L|\times|M|$ set of weights between the input and intermediate layer, where $|M|$ is the size of the intermediate layer, and let $B_1$ be the $|M|\times 1$ bias vector for the intermediate layer. Finally, let \texttt{relu} be the activation function typically used in neural networks applied component-wise and let $W_2$ be the $|M|\times|C|$ set of weights between the intermediate and final layer. Our prediction for this particular column $A_j$ can be expressed as a probability vector:
$$Y_j = \text{softmax}[\texttt{relu}(A_j^T W_1+B_1^T)W_2]$$
Our prior for $Y_j$ can be expressed as a unit vector $Z_j$. We would like to minimize the sum of the cross entropy between $Y_j$ and the ``true'' vectors $Z_j$. Formally, we could write:
$$\min{\sum_{j}{Y_j^T -\log{(Z_j)}}}$$
where the logorithm is taken component-wise. Our neural network will tend towards a $W_1$, $B_1$, and $W_2$ that minimize this objective.

\subsection{Algorithm}
We decided to implement our simple Neural Network using Google's TensorFlow, an open-source Library designed to streamline the construction and testing of neural networks optimized for GPU computations \cite{goo1}. TensorFlow has been shown to provide leverage in diverse learning tasks where attempts are made to infer additional unseen relations between the features \cite{goo2}.

Since we are working in a transductive learning context, we perform several iterations of the following (in our case, $16$ iterations): divide the set into half training data and half testing data, predict the results of the testing data from the training data, repeat. At each iteration, we expect to have $\frac{|R|}{2}$ training samples, each a vector of length $|L|$. We optimize the weights on the connections until the cross entropy for the color predictions is less than $0.20$. We record the prediction at each of the testing nodes.

\subsection{Parameters for Color and Wildness}
After performing $t$ iterations of our algorithm, each right node has an average of $\frac{t}{2}$ labels (with multiplicity). We consider these labels. We would like to give a small bias towards agreeing with the prior, so we assign the node to its prior if the latter agrees with at least $\frac{1}{3}$ of the labelings. If that is not the case, we consider the most frequent label. If that label is present at least $\frac{2}{3}$ of the time, we assign the node that color. If neither condition holds, we mark the node wild. We chose to adjust these values from a default value of $\frac{1}{2}$ (just taking the majority vote) when we noticed that the algorithm was over-zealous in deciding to relabel nodes. It is important to note that when a node is in the testing set, the neural network does not know its prior. So, in a sense, the prior of a node is never used in its own classification (only in the classification of its neighbors).

\subsection{Summary}
In this section, we established that our problem could be treated as a high-dimensional, sparse clustering problem, but also explained some potential limitations of treating it with this mindset. We further argued that, given these limitations, a machine learning framework that might work for our problem is a neural network. Though we have not suggested any deviations from standard machine learning literature, we include a neural network for the sake of comparison.

\section{Performance}

\subsection{Parameters and Metrics}

We wish to understand the relative performance of our algorithms on many different synthetic graphs. Our graph for testing will be generated by either the circle model or the power law model. Our small graphs will consist of $5100$ left nodes, $1700$ right nodes, and $70$ colors. Our large graph will consist of five times as many nodes and colors. We test on seven different randomly generated instances, summarized in the tables below:

\begin{center}
\begin{tabular}{|l|cc|}
\hline
Parameter & Small & Large \\
\hline
Colors & 70 & 350 \\
Left & 5100 & 25500 \\
Right & 1700 & 8500 \\
\hline
\end{tabular}
\end{center}

\begin{center}
\begin{tabular}{|l|cccc|}
\hline
Instance & Model & Size & Misclass & Wildness \\
\hline
1 & Circle & Small & 0.15 & 0.15 \\
2 & Circle & Large & 0.15 & 0.15 \\
3 & Power & Small & 0.05 & 0.05 \\
4 & Power & Small & 0.05 & 0.15 \\
5 & Power & Small & 0.15 & 0.05 \\
6 & Power & Small & 0.15 & 0.15 \\
7 & Power & Large & 0.15 & 0.15 \\
\hline
\end{tabular}
\end{center}

In testing our algorithms, we wish to understand the extent to which we can trust their results. The output of our algorithms can be one of three judgements: wild, same as prior, or misclassified. We will study the fraction of nodes which fall into each true category given their classified category. When a node is classified as prior or wild, we see how many are in the true categories normal, wild, and misclassified. When we are dealing with misclassifications, we need to distinguish between cases where the misclassification was corrected and cases where is was detected but ``miscorrected'' (corrected to the wrong value). We summarized our fields in the following table:

\begin{center}
\begin{tabular}{|l|l|}
\hline
A & Algorithm Name \\
T & Time Elapsed in Seconds \\
\hline
W & Classified Wild Count \\
W:W & True Wild given Wild Class \\
M:W & True Misclass given Wild Class \\
N:W & True Normal given Wild Class \\
\hline
P & Classified Prior Count \\
N:P & True Normal given Prior Class \\
M:P & True Misclass given Prior Class \\
W:P & True Wild given Prior Class \\
\hline
R & Classified Relabeled Count \\
C:R & True Misclass given Relabled Class, Relabel Correct \\
M:R & True Misclass given Relabled Class, Relabel Wrong \\
W:R & True Wild given Relabeled Class \\
N:R & True Normal given Relabeled Class \\
\hline
Wk & Weak Correctness \\
Str & Strong Correctness \\
\hline
\end{tabular}
\end{center}

Notice that we have two ultimate notions of correctness. Strong correctness is the strictest metric. We only get credit for a node if we either give it the correct label or correctly identify it as wild. Weak correctness is a relaxation: we consider a node anomalous if it is either misclassified or wild. We get credit for a node if we correctly detect that it is anomalous, even if we are not able to correctly identify the type of anomaly.

As a check, we expect that the sum of the values conditioned on a fixed classification will be $1$. In the tables below, we express our probability metrics over the seven different data sets (rounded to two decimal places). We have not made considerable effort to optimize performance, so runtimes should be used to measure scaling and not absolute performance.

\subsection{Raw Data}
The raw data is presented in seven tables.

\begin{landscape}


\begin{table} 
\label{r1}
\begin{footnotesize}
\begin{tabular}{|llr|rrrr|rrrr|rrrrr|rr|}
\hline
T1 &      A &      T &    W &   W:W &   M:W &   N:W &     P &   N:P &   M:P &   W:P &    R &   C:R &   M:R &   W:R &   N:R &    Wk &   Str \\
\hline
0 &   TRV. &    0 &    0 &  0.00 &  0.00 &  0.00 &  1700 &  0.73 &  0.13 &  0.14 &    0 &  0.00 &  0.00 &  0.00 &  0.00 &  0.73 &  0.73 \\
1 &   VOT. &    1 &  200 &  1.00 &  0.00 &  0.00 &  1304 &  0.94 &  0.03 &  0.03 &  196 &  0.95 &  0.03 &  0.01 &  0.02 &  0.95 &  0.95 \\
2 &   GRD. &  104 &  255 &  0.82 &  0.13 &  0.05 &  1253 &  0.97 &  0.01 &  0.02 &  192 &  0.94 &  0.01 &  0.00 &  0.05 &  0.96 &  0.94 \\
3 &   CUT. &   31 &  340 &  0.69 &  0.24 &  0.07 &  1219 &  0.99 &  0.00 &  0.01 &  141 &  0.96 &  0.01 &  0.00 &  0.03 &  0.98 &  0.93 \\
4 &  N.BA. &   11 &  239 &  1.00 &  0.00 &  0.00 &  1215 &  1.00 &  0.00 &  0.00 &  246 &  0.87 &  0.03 &  0.00 &  0.10 &  0.98 &  0.98 \\
5 &  H.FN. &  118 &  255 &  0.93 &  0.04 &  0.03 &  1241 &  0.99 &  0.01 &  0.00 &  204 &  0.95 &  0.04 &  0.00 &  0.01 &  0.99 &  0.97 \\
6 &  M.BA. &  238 &  255 &  0.93 &  0.03 &  0.04 &  1223 &  0.99 &  0.01 &  0.00 &  222 &  0.90 &  0.04 &  0.00 &  0.06 &  0.98 &  0.97 \\
7 &   N.N. &  468 &  328 &  0.73 &  0.17 &  0.10 &  1190 &  0.98 &  0.01 &  0.01 &  182 &  0.81 &  0.04 &  0.00 &  0.14 &  0.95 &  0.91 \\
\hline
\end{tabular}
\end{footnotesize}
\end{table}

\begin{table} 
\label{r2}
\begin{footnotesize}
\begin{tabular}{|llr|rrrr|rrrr|rrrrr|rr|}
\hline
T2 &      A &       T &     W &   W:W &   M:W &   N:W &     P &   N:P &   M:P &   W:P &     R &   C:R &   M:R &  W:R &   N:R &    Wk &   Str \\
\hline
0 &   TRV. &     0 &     0 &  0.00 &  0.00 &  0.00 &  8500 &  0.71 &  0.14 &  0.15 &     0 &  0.00 &  0.00 & 0.00 &  0.00 &  0.71 &  0.71 \\
1 &   VOT. &     4 &  1129 &  1.00 &  0.00 &  0.00 &  6317 &  0.95 &  0.03 &  0.02 &  1054 &  0.90 &  0.04 & 0.00 &  0.05 &  0.96 &  0.95 \\
2 &   GRD. &  2455 &  1275 &  0.79 &  0.15 &  0.06 &  6246 &  0.95 &  0.01 &  0.04 &   979 &  0.92 &  0.03 & 0.00 &  0.05 &  0.95 &  0.92 \\
3 &   CUT. &   971 &  1796 &  0.71 &  0.22 &  0.07 &  5934 &  1.00 &  0.00 &  0.00 &   770 &  0.98 &  0.01 & 0.00 &  0.02 &  0.98 &  0.94 \\
4 &  N.BA. &   255 &  1282 &  1.00 &  0.00 &  0.00 &  5928 &  1.00 &  0.00 &  0.00 &  1290 &  0.84 &  0.05 & 0.00 &  0.11 &  0.98 &  0.97 \\
5 &  H.FN. &  3294 &  1275 &  0.86 &  0.04 &  0.10 &  6163 &  0.96 &  0.01 &  0.03 &  1062 &  0.93 &  0.05 & 0.00 &  0.02 &  0.95 &  0.94 \\
6 &  M.BA. &  5052 &  1275 &  0.91 &  0.02 &  0.07 &  6133 &  0.97 &  0.02 &  0.02 &  1092 &  0.91 &  0.05 & 0.00 &  0.04 &  0.96 &  0.95 \\
\hline
\end{tabular}
\end{footnotesize}
\end{table}

\begin{table} 
\label{r3}
\begin{footnotesize}
\begin{tabular}{|llr|rrrr|rrrr|rrrrr|rr|}
\hline
T3 &      A &      T &   W &   W:W &   M:W &   N:W &     P &   N:P &   M:P &   W:P &    R &   C:R &   M:R &   W:R &   N:R &    Wk &   Str \\
\hline
0 &   TRV. &    0 &   0 &  0.00 &  0.00 &  0.00 &  1700 &  0.90 &  0.04 &  0.06 &    0 &  0.00 &  0.00 &  0.00 &  0.00 &  0.90 &  0.90 \\
1 &   VOT. &    1 &  66 &  1.00 &  0.00 &  0.00 &  1557 &  0.98 &  0.01 &  0.01 &   77 &  0.77 &  0.00 &  0.13 &  0.10 &  0.97 &  0.97 \\
2 &   GRD. &   66 &  85 &  0.61 &  0.19 &  0.20 &  1579 &  0.95 &  0.02 &  0.03 &   36 &  0.89 &  0.00 &  0.00 &  0.11 &  0.95 &  0.94 \\
3 &   CUT. &   19 &  94 &  0.73 &  0.07 &  0.19 &  1533 &  0.98 &  0.00 &  0.02 &   73 &  0.89 &  0.00 &  0.03 &  0.08 &  0.97 &  0.96 \\
4 &  N.BA. &    6 &  94 &  0.97 &  0.00 &  0.03 &  1518 &  1.00 &  0.00 &  0.00 &   88 &  0.81 &  0.01 &  0.02 &  0.16 &  0.99 &  0.98 \\
5 &  H.FN. &   69 &  85 &  0.82 &  0.01 &  0.16 &  1508 &  0.99 &  0.00 &  0.00 &  107 &  0.64 &  0.01 &  0.19 &  0.16 &  0.98 &  0.96 \\
6 &  M.BA. &  156 &  85 &  0.85 &  0.01 &  0.14 &  1513 &  0.99 &  0.00 &  0.01 &  102 &  0.70 &  0.00 &  0.15 &  0.16 &  0.98 &  0.97 \\
\hline
\end{tabular}
\end{footnotesize}
\end{table}

\begin{table} 
\label{r4}
\begin{footnotesize}
\begin{tabular}{|llr|rrrr|rrrr|rrrrr|rr|}
\hline
T4 &      A &      T &    W &   W:W &   M:W &   N:W &     P &   N:P &   M:P &   W:P &    R &   C:R &   M:R &   W:R &   N:R &    Wk &   Str \\
\hline
0 &   TRV. &    0 &    0 &  0.00 &  0.00 &  0.00 &  1700 &  0.82 &  0.04 &  0.14 &    0 &  0.00 &  0.00 &  0.00 &  0.00 &  0.82 &  0.82 \\
1 &   VOT. &    1 &  118 &  1.00 &  0.00 &  0.00 &  1474 &  0.93 &  0.01 &  0.05 &  108 &  0.43 &  0.01 &  0.44 &  0.12 &  0.94 &  0.91 \\
2 &   GRD. &   66 &  255 &  0.62 &  0.14 &  0.24 &  1431 &  0.93 &  0.01 &  0.06 &   14 &  0.71 &  0.00 &  0.07 &  0.21 &  0.90 &  0.88 \\
3 &   CUT. &   19 &  228 &  0.82 &  0.07 &  0.11 &  1409 &  0.96 &  0.00 &  0.04 &   63 &  0.68 &  0.02 &  0.14 &  0.16 &  0.95 &  0.93 \\
4 &  N.BA. &    7 &  229 &  0.99 &  0.00 &  0.01 &  1364 &  1.00 &  0.00 &  0.00 &  107 &  0.55 &  0.03 &  0.17 &  0.25 &  0.98 &  0.97 \\
5 &  H.FN. &   75 &  255 &  0.82 &  0.06 &  0.13 &  1346 &  0.99 &  0.01 &  0.01 &   99 &  0.38 &  0.02 &  0.29 &  0.30 &  0.95 &  0.93 \\
6 &  M.BA. &  147 &  255 &  0.82 &  0.05 &  0.13 &  1348 &  0.99 &  0.00 &  0.00 &   97 &  0.45 &  0.02 &  0.31 &  0.22 &  0.96 &  0.94 \\
\hline
\end{tabular}
\end{footnotesize}
\end{table}

\begin{table} 
\label{r5}
\begin{footnotesize}
\begin{tabular}{|llr|rrrr|rrrr|rrrrr|rr|}
\hline
T5 &      A &      T &    W &   W:W &   M:W &   N:W &     P &   N:P &   M:P &   W:P &    R &   C:R &  M:R &   W:R &   N:R &    Wk &   Str \\
\hline
0 &   TRV. &    0 &    0 &  0.00 &  0.00 &  0.00 &  1700 &  0.80 &  0.15 &  0.06 &    0 &  0.00 &  0.00 &  0.00 &  0.00 &  0.80 &  0.80 \\
1 &   VOT. &    1 &   54 &  1.00 &  0.00 &  0.00 &  1430 &  0.94 &  0.04 &  0.02 &  216 &  0.90 &  0.00 &  0.08 &  0.02 &  0.95 &  0.94 \\
2 &   GRD. &   62 &   85 &  0.47 &  0.36 &  0.16 &  1522 &  0.88 &  0.08 &  0.04 &   93 &  0.99 &  0.00 &  0.00 &  0.01 &  0.88 &  0.86 \\
3 &   CUT. &   18 &  104 &  0.63 &  0.26 &  0.11 &  1367 &  0.98 &  0.00 &  0.02 &  229 &  0.96 &  0.00 &  0.01 &  0.03 &  0.97 &  0.95 \\
4 &  N.BA. &    6 &   93 &  0.98 &  0.00 &  0.02 &  1348 &  0.99 &  0.01 &  0.00 &  259 &  0.93 &  0.00 &  0.01 &  0.05 &  0.98 &  0.98 \\
5 &  H.FN. &   65 &   85 &  0.84 &  0.11 &  0.06 &  1355 &  0.99 &  0.01 &  0.00 &  260 &  0.88 &  0.00 &  0.07 &  0.04 &  0.98 &  0.96 \\
6 &  M.BA. &  149 &   85 &  0.76 &  0.18 &  0.06 &  1364 &  0.98 &  0.01 &  0.01 &  251 &  0.90 &  0.00 &  0.07 &  0.03 &  0.98 &  0.96 \\
7 &   N.N. &  810 &  186 &  0.23 &  0.30 &  0.47 &  1266 &  0.95 &  0.03 &  0.03 &  248 &  0.55 &  0.09 &  0.10 &  0.26 &  0.87 &  0.81 \\
\hline
\end{tabular}
\end{footnotesize}
\end{table}

\begin{table} 
\label{r6}
\begin{footnotesize}
\begin{tabular}{|llr|rrrr|rrrr|rrrrr|rr|}
\hline
T6 &      A &      T &    W &   W:W &   M:W &   N:W &     P &   N:P &   M:P &   W:P &    R &   C:R &   M:R &   W:R &   N:R &    Wk &   Str \\
\hline
0 &   TRV. &    0 &    0 &  0.00 &  0.00 &  0.00 &  1700 &  0.72 &  0.13 &  0.16 &    0 &  0.00 &  0.00 &  0.00 &  0.00 &  0.72 &  0.72 \\
1 &   VOT. &    1 &   97 &  1.00 &  0.00 &  0.00 &  1432 &  0.85 &  0.06 &  0.09 &  171 &  0.77 &  0.01 &  0.22 &  0.01 &  0.87 &  0.85 \\
2 &   GRD. &   64 &  255 &  0.55 &  0.33 &  0.13 &  1402 &  0.84 &  0.07 &  0.09 &   43 &  0.98 &  0.02 &  0.00 &  0.00 &  0.85 &  0.80 \\
3 &   CUT. &   18 &  301 &  0.71 &  0.23 &  0.06 &  1257 &  0.95 &  0.01 &  0.04 &  142 &  0.95 &  0.01 &  0.02 &  0.01 &  0.95 &  0.91 \\
4 &  N.BA. &    6 &  258 &  0.97 &  0.02 &  0.02 &  1224 &  0.98 &  0.02 &  0.00 &  218 &  0.88 &  0.02 &  0.05 &  0.06 &  0.98 &  0.97 \\
5 &  H.FN. &   67 &  255 &  0.82 &  0.10 &  0.07 &  1244 &  0.96 &  0.02 &  0.02 &  201 &  0.82 &  0.01 &  0.13 &  0.03 &  0.95 &  0.92 \\
6 &  M.BA. &  152 &  255 &  0.84 &  0.09 &  0.07 &  1244 &  0.96 &  0.02 &  0.02 &  201 &  0.82 &  0.03 &  0.12 &  0.03 &  0.96 &  0.92 \\
7 &   N.N. &  790 &  138 &  0.30 &  0.12 &  0.57 &  1443 &  0.99 &  0.00 &  0.01 &  119 &  0.44 &  0.00 &  0.16 &  0.40 &  0.91 &  0.89 \\ 
\hline
\end{tabular}
\end{footnotesize}
\end{table}

\begin{table} 
\label{r7}
\begin{footnotesize}
\begin{tabular}{|llr|rrrr|rrrr|rrrrr|rr|}
\hline
T7 &      A &       T &     W &   W:W &   M:W &   N:W &     P &   N:P &   M:P &   W:P &     R &   C:R &   M:R &   W:R &   N:R &    Wk &   Str \\
\hline
0 &   TRV. &     0 &     0 &  0.00 &  0.00 &  0.00 &  8500 &  0.72 &  0.13 &  0.15 &     0 &  0.00 &  0.00 &  0.00 &  0.00 &  0.72 &  0.72 \\
1 &   VOT. &     4 &   819 &  1.00 &  0.00 &  0.00 &  6773 &  0.90 &  0.06 &  0.04 &   908 &  0.78 &  0.01 &  0.17 &  0.04 &  0.92 &  0.90 \\
2 &   GRD. &  1562 &  1275 &  0.57 &  0.30 &  0.13 &  6969 &  0.86 &  0.07 &  0.08 &   256 &  0.95 &  0.02 &  0.00 &  0.04 &  0.86 &  0.82 \\
3 &   CUT. &   574 &  1594 &  0.73 &  0.19 &  0.07 &  6140 &  0.98 &  0.01 &  0.01 &   766 &  0.93 &  0.01 &  0.02 &  0.04 &  0.97 &  0.93 \\
4 &  N.BA. &   148 &  1230 &  0.99 &  0.00 &  0.01 &  6135 &  0.99 &  0.01 &  0.00 &  1135 &  0.87 &  0.02 &  0.02 &  0.08 &  0.98 &  0.97 \\
5 &  H.FN. &  1662 &  1275 &  0.73 &  0.09 &  0.18 &  6202 &  0.95 &  0.02 &  0.04 &  1023 &  0.83 &  0.02 &  0.11 &  0.04 &  0.93 &  0.90 \\
6 &  M.BA. &  3492 &  1275 &  0.70 &  0.08 &  0.21 &  6153 &  0.95 &  0.02 &  0.03 &  1072 &  0.77 &  0.02 &  0.16 &  0.06 &  0.92 &  0.89 \\
\hline
\end{tabular}
\end{footnotesize}
\end{table}
\end{landscape}

\subsection{Discussion}

First, we make three general observations. The first is that most of the algorithms scale by a factor of twenty-five in running time when we go from the small graph to the large graph. This makes sense since we are increasing both the graph size (nodes and edges) by a factor of five and the number of colors by a factor of five. The voting algorithm is an exception. It scales only by a factor of five for the number of nodes and does not scale with number of colors. The differences in the running times of the message passing algorithms can be largely explained by the number of iterations to convergence and not the speed of individual iterations. The second observation is that, as expected, the power law complicates matters: performance is better on the plain circle graph almost accross the board. Finally, we note that in the scaled up graphs with the same error rates several of the algorithms actually \emph{improve} in performance, suggesting that scale is only a challenge of timing and not correctness.

Now we list a series of observations and speculations, broken down by algorithm:

\begin{itemize}
	\item \textbf{Voting: }
Though the voting algorithm technically has close to the weakest performance, beating only gradient descent, there are a few lessons to be learned. It runs extremely quickly. Unlike most other algorithms, it only needs to store a single value at the nodes and not a whole probability vector. With the experimental parameter settings, it was the only algorithm which labeled nodes wild with perfect precision across graph types and sizes. However, this result is mitigated by the fact that its recall was approximately half that of the other algorithms.
	\item \textbf{Gradient: }
The gradient algorithm performs reasonably on the circle model, but once the power law is introduced it detects wilds at a significantly lower rate than the other algorithms with significantly higher error. Its one strength may be relabeling. With the experimental parameter settings, it was very cautious in relabeling, scoring the highest precision among the algorithms. This result is mitigated by the fact that its recall was significantly lower than the other algorithms. The other algorithms could be set to relabel more cautiously with some parameter tweaking.
	\item \textbf{Min Cut: }
The minimum cut is the second-strongest algorithm in terms of weak correctness. It is surprisingly fast. In the larger graph, it takes approximately $8$ seconds per cut. Furthermore, it is not affected as much as the other algorithms when moving from the circle graphs to the power law graphs. It is good at correcting nodes which are mislabeled, consistently relabeling them with higher precision than other algorithms with comparable recall (all but gradient descent). Where it comes up short is in its model of wildness: it consistently labels nodes wild $20\%$ more often than it needs to and suffers a corresponding loss in precision. It should be possible to exploit the fact that many of the nodes being incorrectly classified as wild are in fact mislabeled nodes.
	\item \textbf{Naive Bayes: }
The naive Bayesian algorithm performs the best among all these algorithms in terms of weak and strong correctness. Furthermore, since its fundamental operations is multiplication, as opposed to addition, it converges extremely quickly. Examining the tables, we see its dominance can be attributed to detecting wild nodes with near-perfect precision. We speculate that this happens because the naive Bayesian algorithm has an explicit model for what it means to be wild, while in the other algorithms wildness is an absense of pattern.
	\item \textbf{Harmonic Function: }
Notice the difference between the performance of the random walk in Tables $4$ and $5$. That is, the performance of the random walk drops more sharply when wild nodes are introduced into the graph than when mislabeled nodes are introduced. Intuitively, this makes sense. When a random walk hits a wild node, its performance thereafter gives no useful information about what color its starting node should be. On the other hand, when it hits a mislabeled node its current position is noise but its future movement may still provide useful color information to its start node. In probabilistic terms, having more wilds increases the probability of traversing an uninformative edge and thus ending in a color different from the starting color. Aside from that, the random walk appears to have a consistent issue relabeling nodes which should be left wild and marking wild some nodes which are normal.
	\item \textbf{Multinomial Bayes: }
Given the discussion of similarity between the harmonic function algorithm and the multinomial Bayesian algorithm in convergence conditions, it is not surprising that the multinomial Bayesian algorithm performs similarly to the random walk. We had hoped that by having a model of wildness incorporated throughout and not retrofitted, we would be able to improve upon the random walk performance but it does not appear to be the case. There is one key difference, though. Unlike the harmonic function algorithm, which passes a whole vector as its message, the multinomial Bayesian algorithm only passes a single color (though both still store the full vector in memory at their node). When working on very large graphs in parallel architectures, this fact of smaller message sizes may be important and may affect the choice between otherwise similar algorithms.
	\item \textbf{Neural Network: }
Generally, the performance of the neural network is not impressive, coming in slightly below average in Table $6$. Interestingly, its performance does not drop in Table $6$ from Table $5$. In fact, it improves. To understand why this might be happening, recall that our transductive learning algorithm splits the data into training and testing sets. If a left node is of low degree, then cutting its degree in half may make it behave unreliably in training. For example, consider a left node which is only adjacent to one right node. In the training set, it could be considered a perfect predictor of the color of that right node, even if its hidden right neighbor (on which it will be tested) does not agree in color. A similar argument could be made for a right node of low degree. This might also explain why the neural network relabels many normal nodes or marks them wild. Sparsity causes chaos in the nodes of very low degree.
\end{itemize}

At the risk of over-generalizing, we wish to draw a tentative conclusion from the data as a whole. We suggest that, on average, an algorithm performs better the more tailored its model or objective function is to the problem at hand. For example, the Naive Bayes algorithm seems to succeed by having an explicit model for wildness, while the Minimum Cut algorithm accounts for each type of error explicitly in its objective function. The simplest algorithms, such as the gradient algorithm, which looks for a best-fit, do not have nuances of the problem built in. They are seeking general agreement in a broader sense and thus cannot reach as strong of a result.

\section{Conclusion}
We began our paper by introducing several problems which could be naturally modeled as label correction on large bipartite graphs. We introduced geometric techniques for generating random graph instances with untrustworthy labels and showed that they exhibited desirable properties in terms of locality and degree distributions.

The eight algorithms we tested were: a trivial algorithm, a voting algorithm based on semi-definite programming, a gradient descent algorithm based on a linear best-fit model with quadratic penalty terms, a minimum cut algorithm, a naive Bayesian algorithm, a harmonic function algorithm, a multinomial Bayesian algorithm, and a neural network algorithm. For large graphs with high error rates, the naive Bayesian algorithm emerged victorious when we examined a single success metric, with the minimum cut algorithm coming in close behind and the harmonic function and multinomial Bayesian in third. However, the true story was more nuanced, with certain algorithms performing better in certain metrics and instances.

The niche specialties of these algorithms suggest that the best approach in a particular application may depend heavily on context and that there is potential for a meta-algorithm which incorporates the judgements of several algorithms or which sequentially applies these algorithms, using the outputs of one as priors for another.



\begin{thebibliography}{99}
\bibitem{afr} Foto N. Afriti, Phokion G. Kolaitis.
Repair checking in inconsistent data bases: algorithms and complexity.
Proceedings ICDT'09, 31-41,
ACM, New York, 2009

\bibitem{agg} C. C. Aggarwal and P. S. Yu.
Outlier detection for high dimensional data.
Proceedings of the 2001 ACM SIGMOD International Conf. on Management of Data, 2001

\bibitem{ako} L. Akoglu, H. Tong, D. Koutra.
Graph based anomaly detection and description: a survey.
Data Mining and Knowledge Discovery, 29, 626-688, 2015

\bibitem{are} M. Arenas, L. Bertossi, J. Chomicki, Xin He, V. Raghavan, J. Spinrad.
 Scalar aggregation in inconsistent databases.
Theoretical Computer Science, 296, 405-434, 2003

\bibitem{bott} L. Bottou.
Large-scale machine learning with stochastic gradient descent.
Proceedings of COMPSTAT2010, 2010

\bibitem{cha} D. Chakrabarti, S. Papadimitriou, D. Modha, C. Faloutsos.
Fully automatic cross-associations.
KDD, 2004

\bibitem{chand} V. Chandola, A. Banerjee, and V. Kumar.
Anomaly detection: a survey.
ACM Computing Surveys, 2009

\bibitem{coh} Reuven Cohen \& Shlomo Havlin.
\textit{Complex Networks},
Cambridge University Press, 2010.

\bibitem{coop} G. Cooper.
The computational complexity of probabilistic inference using Bayesian belief networks.
Artificial Intelligence, 1990.

\bibitem{dai} H. Dai, F. Zhu, E. Lim, H. Pang.
Detecting anomalies in bipartite graphs with mutual dependency principles.
12th IEEE International Conference on Data Mining, 171-180, 2012

\bibitem{dar}
R. W. R. Darling, David G. Harris, Dev R. Phulara, John A. Proos.
The combinatorial data fusion problem in conflicted-supervised learning.
arXiv:1809.08723, 2018

\bibitem{dhi} I. Dhillon, S. Mallela, and D. Modha.
Information theoretic co-clustering.
Proceedings of the 9th ACM SIGKDD, 2003

\bibitem{fel} R. Feldman, J. Sanger.
\textit{The Text Mining Handbook}.
Cambridge University Press, 2007

\bibitem{fou} F. Fouss, A. Pirotte, J. Render, M. Saerens.
Random-walk computation of similarities between nodes of a graph with application to collaborative recommendation.
IEEE Transactions on Knowledge and Data Engineering, 2007

\bibitem{mgeo} M. Geomans \& D. Williamson.
Improved approximation algorithms for maximum cut and satisfiability problems using semidefinite programming.
Proceedings of the 26th Annual Symposium on TOC, 1994

\bibitem{goo1} Google Research.
Tensorflow: Large-Scale Machine Learning on Heterogeneous Distributed Systems.
arXiv:1603.04467, 2016

\bibitem{goo2} Google Research.
Wide \& Deep Learning for Recommender Systems.
Proceedings of the 1st Workshop on Deep Learning for Recommender Systems, 2016

\bibitem{hoch} Dorit Hochbaum.
An efficient algorithm for image segmentation, Markov Random Fields and related problems.
Journal of the ACM, 2001

\bibitem{jeh} G. Jeh \& J. Widom.
Simrank: a measure of structural-context similarity.
KDD, 2002

\bibitem{jen} Finn V. Jensen \& Thomas D. Nielsen.
\textit{Bayesian Networks and Decision Graphs}.
Springer, 2007

\bibitem{jordan} M. Jordan, Z. Ghahramani, T. Jaakkola, L. Saul.
An introduction to variational methods for graphical models.
Machine Learning, 1999

\bibitem{kary} G. Karypis \& V. Kumar.
Parallel multilevel series $k$-way partitioning scheme for irregular graphs.
SIAM Rev., 41(2), 278–300, 1999

\bibitem{kja} Uffe B. Kjaerulff \& Anders L. Madsen,
\textit{Bayesian Networks and Influence Diagrams: A Guide to Construction and Analysis}.
Springer, 2013

\bibitem{lov} L. Lovasz.
\textit{Random Walks on Graphs: A Survey}.
Combinatorics, 1993

\bibitem{niu} F. Niu.
Hogwild!: A lock-free approach to parallelizing stochastic gradient descent.
Advances in Neural Information Processing Systems, 2001

\bibitem{sun} J. Sun, H. Qu, D. Chakrbarti, C. Faloutsos.
Neighborhood formation and anomaly detection in bipartite graphs.
5th IEEE International Conference on Data Mining, 418-425, 2005

\bibitem{tan} M. Taniguchi, M. Haft, J. Hollmen, and V. Tresp.
Fraud detection in communication networks using neural and probabilistic methods.
Acoustics, Speech, and Signal Processing 1998

\bibitem{vand} L. Vandenberghe \& S. Boyd.
Semidefinite programming.
SIAM Review, 1996

\bibitem{velednitsky2018isolation} Mark Velednitsky \& Dorit Hochbaum.
Isolation branching: a branch and bound algorithm for the $k$-terminal cut problem.
International Conference on Combinatorial Optimization and Applications, 2018

\bibitem{velednitsky2018polynomial} Mark Velednitsky \& Dorit Hochbaum.
A polynomial-time algorithm for (2-2/k)-stable Instances of the k-terminal cut problem.
arXiv preprint arXiv:1806.06091, 2018

\bibitem{xwang} X. Wang, X. Wang, and D. Wilkes.
A minimum spanning tree inspired clustering-based outlier detection technique.
Proceedings of the 27th Industrial Conference on Data Mining, 2011

\bibitem{yedi} J. Yedidia, W. Freeman, and Y. Weiss.
Understanding belief propagation and its generalizations.
Exploring AI in the New Millenium, 2003

\bibitem{yin} X. Yin, J. Han, P.S. Yu.
Truth discovery with multiple conflicting information providers on the web.
SIGKDD Conf., 2007

\bibitem{zha} H. Zha,
Generic summarization and keyphrase extraction using mutual reinforcement principle and sentence clustering,
SIGIR Conf, 2002

\end{thebibliography}
\end{document}